\newcommand{\bI}{\mathbf{I}}
\newcommand{\ba}{\mathbf{a}}
\newcommand{\bb}{\mathbf{b}}
\newcommand{\br}{\mathbf{r}}
\newcommand{\bs}{\mathbf{s}}
\newcommand{\bG}{\mathbf{G}}
\newcommand{\bK}{\mathbf{K}}
\newcommand{\Knys}{{\bK}_{nys}}
\newcommand{\bx}{\mathbf{x}}
\newcommand{\by}{\mathbf{y}}
\newcommand{\bL}{\mathbf{L}}
\newcommand{\bU}{\mathbf{U}}
\newcommand{\norm}[1]{\Vert #1 \Vert}
\newcommand{\Tr}[1]{\operatorname{Trace}(#1)}
\newcommand{\matern}{Mat\'ern-3/2 }
\newcommand{\tn}[1]{\begin{tiny{#1}\end{tiny}}}
\newcommand{\shifanrewriting}[1]{{#1}}
\DeclareMathOperator*{\argmax}{arg\,max}
\newcommand{\noise}{\mu\,\bI}
\newcommand{\Schur}{\bK_{22} + \noise -\bK_{12}^{\top}(\bK_{11}+\noise)^{-1}\bK_{12}}
\newcommand{\bo}{\mathbf{0}}
\newcommand{\bM}{\mathbf{M}}
\def\AFN{\texttt{AFN}}
\def\RAN{\texttt{RAN}}
\newenvironment{manualtheorem}[1]{%
  \manualtheoreminner
}{\endmanualtheoreminner}
\crefname{hypothesis}{Hypothesis}{Hypotheses}
\title{Robust structured block preconditioner for Gaussian Kernel Matrices\thanks{Submitted to the editors DATE.
\funding{This work was funded by the Fog Research Institute under contract no.~FRI-454.}}}
\author{XXX\thanks{Emory 
  (\email{xxx@emory.edu}, \url{xxx}).}
\and xxx\thanks{Georgia Tech
  (\email{xxx}, \email{xxx}).}
\and xxx\footnotemark[3]}
\newcommand*{\addFileDependency}[1]{% argument=file name and extension
  \typeout{(#1)}% latexmk will find this if $recorder=0 (however, in that case, it will ignore #1 if it is a .aux or .pdf file etc and it exists! if it doesn't exist, it will appear in the list of dependents regardless)
  \@addtofilelist{#1}% if you want it to appear in \listfiles, not really necessary and latexmk doesn't use this
  \IfFileExists{#1}{}{\typeout{No file #1.}}% latexmk will find this message if #1 doesn't exist (yet)
}
\title{An Adaptive Factorized Nystr\"om Preconditioner for Regularized Kernel Matrices}
\author{Shifan Zhao\thanks{Department of Mathematics, Emory University, Atlanta, GA 30322 (\email{shifan.zhao@emory.edu}, \email{tianshi.xu@emory.edu},\email{yxi26@emory.edu}). The research of S.~Zhao, T.~Xu and Y.~Xi is supported by NSF award OAC 2003720. T.~Xu is also partially supported by NSF award DMS 1912048.}
\and Tianshi Xu$^{\ast}$
\and Hua Huang\thanks{School of Computational Science and Engineering, Georgia Institute of Technology, Atlanta, GA 30332 (\email{huangh223@gatech.edu},\email{echow@cc.gatech.edu}). The research of H.~Huang and E.~Chow is supported by NSF award OAC 2003683.}
\and Edmond Chow$^{\dagger}$ 
\and Yuanzhe Xi$^{\ast}$}
\begin{document}

\maketitle
\begin{abstract}
The spectrum of a kernel matrix significantly depends on the parameter
values of the kernel function used to define the kernel matrix.
This makes it challenging to design a preconditioner for a regularized
kernel matrix that is robust across different parameter values.
This paper proposes the Adaptive Factorized Nystr\"om
(\texttt{AFN}) preconditioner.  The preconditioner is designed for
the case where the rank $k$ of the Nystr\"om approximation
is large, i.e., for kernel function parameters that lead to kernel
matrices with eigenvalues that decay slowly. % Other Nystr\"om preconditioners for a regularized kernel matrix use the
% Sherman--Morrison--Woodbury (SMW) formula and must solve with or
% compute an eigendecomposition of a $k$-by-$k$ matrix.
% In contrast, \texttt{AFN} is a factorized form that avoids
% the SMW formula. 
\texttt{AFN}  deliberately chooses a well-conditioned submatrix
 to solve with and corrects a Nystr\"om approximation with a factorized sparse approximate  matrix inverse. This makes
\texttt{AFN} efficient for kernel
matrices with large numerical ranks.  \texttt{AFN} also adaptively
chooses the size of this submatrix to balance accuracy and cost.
\end{abstract}

\begin{keywords}
Kernel matrices, preconditioning, sparse approximate inverse, Nystr\"om approximation, farthest point sampling, Gaussian process regression
\end{keywords}

\begin{AMS}
   	65F08, 65F10, 65F55, 68W25  
\end{AMS}
\section{Introduction}
In this paper, we seek efficient preconditioning techniques for the iterative solution of large, regularized linear systems associated with a kernel matrix $\mathbf{K}$,
\begin{equation}
\label{eq:Problem}
(\mathbf{K} + \mu\bI) \, \ba = \bb,
\end{equation}
where $\bI$ is the $n\times n$ identity matrix,  $\mu\in\mathbb{R}$ is a regularization parameter, $\ba, \bb \in \mathbb{R}^{n}$, and $\mathbf{K}\in \mathbb{R}^{n\times n}$ is the kernel matrix whose $(i,j)$-th entry is defined as $\mathcal{K}(\bx_i,\bx_j)$ with a symmetric positive semidefinite (SPSD) kernel function $\mathcal{K}:\mathbb{R}^d\times \mathbb{R}^d \rightarrow \mathbb{R}$ and data points $\{\bx_i\}_{i=1}^{n}$. A function $\mathcal{K}(\cdot,\cdot)$ defined over a domain $\mathcal{X}$ is SPSD if $\mathcal{K}(\bx_i,\bx_j) = \mathcal{K}(\bx_j,\bx_i), ~\forall \bx_i,\bx_j \in \mathcal{X}$ and $\sum_{i=1}^{n}\sum_{j=1}^{n}c_ic_j\mathcal{K}(\bx_i,\bx_j) \ge 0, ~\forall \bx_1, \dots, \bx_n \in \mathcal{X}$  given $n\in \mathbb{N}$ and $c_1,c_2,\dots, c_n \in \mathbb{R}.$
For example, $\mathcal{K}$ can be chosen as a Gaussian kernel function,
\begin{equation}
\mathcal{K}(\bx,\by) = \exp\left(-\frac{\Vert\bx-\by\Vert_2^2}{l^2}  \right),
\label{eq:gaussianf}
\end{equation} 
where $l$ is a kernel function parameter called the \emph{length-scale}.

Linear systems of the form \eqref{eq:Problem} appear in many applications,
including Kernel Ridge Regression (KRR) \cite{NIPS2015_f3f27a32}
and Gaussian Process Regression (GPR) \cite{3569}. 
When the number of data points $n$ is small, solution methods based
on dense matrix factorizations are the most efficient. When $n$ is large,
a common approach is to solve \eqref{eq:Problem} using a sparse
or low-rank approximation to $\mathbf{K}$ \cite{schafer_sparse_2021,schafer_compression_2021,nys2017}. In this paper, we
pursue an exact solution approach for \eqref{eq:Problem} with iterative
methods. Fast matrix-vector multiplications by $\mathbf{K}$ for
the iterative solver are available through fast transforms \cite{fast-gauss,NIPS2004_85353d3b}
and hierarchical matrix methods \cite{7130620,smash,erlandson_accelerating_2020,DDSMASH,sherry19,ambikasaran2013mathcal,chenhan2019distributed,rebrova2018study,si2014memory,march2015robust}.
This paper specifically addresses the problem of preconditioning
for the iterative solver.

In KRR, GPR, and other applications,
the kernel function parameters must be estimated that fit the data at hand.
This involves an optimization process, for example maximizing a
likelihood function, which in turn involves solving
\eqref{eq:Problem} for kernel matrices given the same
data points but different values of the kernel function parameters.
Different values of the kernel function parameters lead to different
characteristics of the kernel matrix.  For the Gaussian kernel function
above, Figure \ref{fig:varyingl} (left) shows the eigenvalue spectrum
of 61 regularized $1000 \times 1000$ kernel matrices. $1000$ data points are sampled inside a cube with edge length $10$. In all the experiments, the side of the $d$-dimensional cube is scaled by $n^{1/d}$ in order to maintain a constant density as we increase the number of data points. %
Figure \ref{fig:varyingl} (right) demonstrates the significant variation in the number of iterations required by the unpreconditioned Conjugate Gradient (CG) method to solve linear systems associated with these kernel matrices. Notably, linear systems are more efficiently solved at the extremes of $l$, whether very high or very low, compared to moderate values. This is consistent with the theoretical convergence results of CG. The convergence of CG is fast, for example, when there exists a cluster of eigenvalues with only a few outliers deviating from this cluster. In the context of kernel matrices, a small $l$ yields a kernel matrix that closely resembles a diagonal matrix which has a tight cluster of eigenvalues. On the other hand, a large $l$ typically results in a scenario where, aside from a few outliers, most eigenvalues are around the regularization parameter $\mu$. 

\begin{figure}[t]
\centering
\includegraphics[width=.48\linewidth]{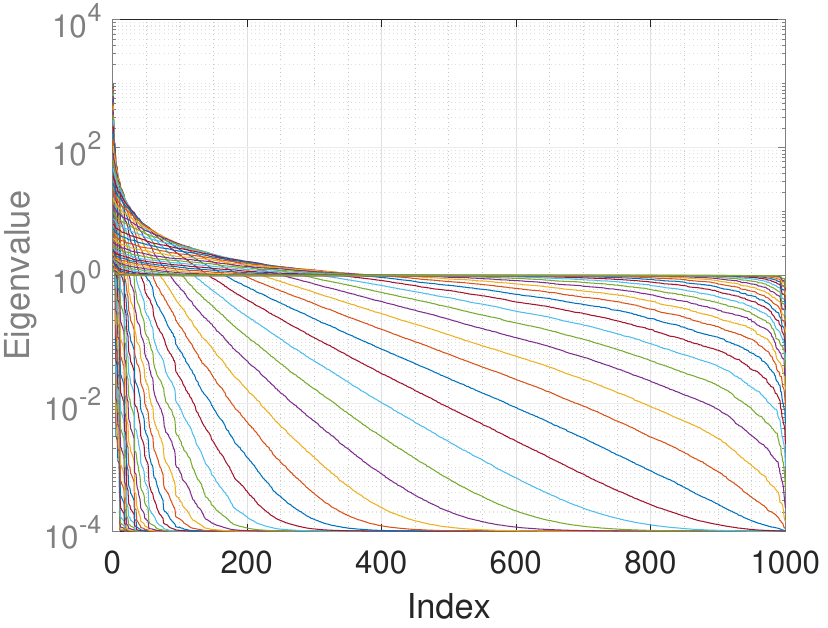}
\includegraphics[width=.475\linewidth]{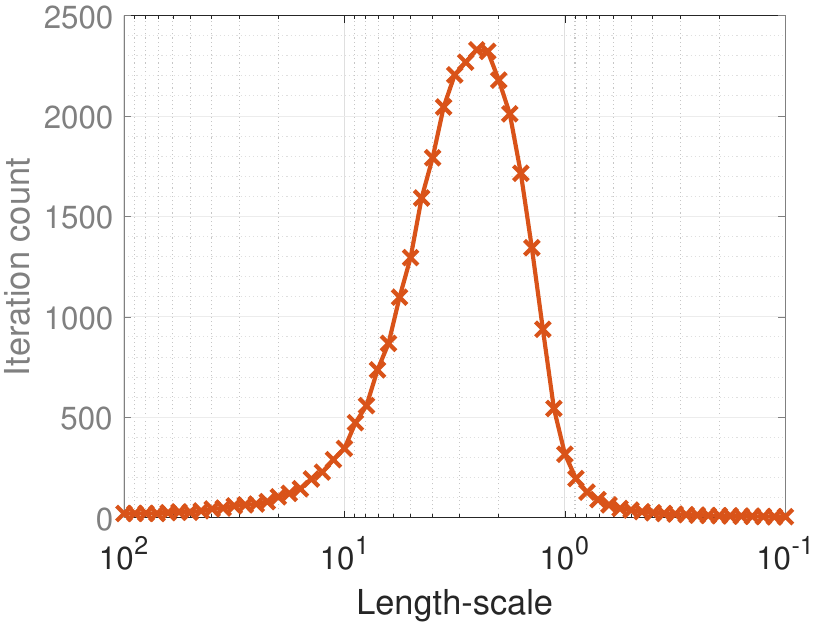}
%\caption{Left: Spectrum of $41$ regularized Gaussian kernel matrices associated with the same $3000$ points sampled randomly over a square with width $\sqrt{3000}$ and a fixed $\mu=0.0001$ but different length-scales $l$. Right: Iteration counts of unpreconditioned CG to solve \eqref{eq:Problem} associated with the $41$ regularized kernel matrices to reach the relative residual tolerance $10^{-4}$. }
\caption{Left: Spectrum of $61$ regularized Gaussian kernel matrices associated with the same $1000$ points sampled randomly over a cube with edge length $10$ and a fixed regularization parameter $\mu=0.0001$ but different length-scales $l$. Right: Iteration counts of unpreconditioned \texttt{CG} to solve Equation \eqref{eq:Problem} for the $61$ regularized kernel matrices to reach the relative residual tolerance $10^{-4}$. }
\label{fig:varyingl}
\end{figure}

In this paper, we seek a preconditioner for kernel matrix systems
\eqref{eq:Problem} that is adaptive to different kernel matrices $\mathbf{K}$
corresponding to different values of kernel function parameters such that it can flatten the curve in Figure \ref{fig:varyingl} (right).
When the numerical rank of $\mathbf{K}$ is small, there exist good methods
\cite{shabat_fast_2021,frangella_randomized_2021}
for preconditioning $\mathbf{K} + \mu \mathbf{I}$ based on a 
Nystr\"om approximation \cite{nys2001}
to the kernel matrix. We will provide a self-contained description of the Nystr\"om approximation in Section \ref{sec:nys}, as it is related to our proposed preconditioner. 

\section{Background: Nystr\"om approximation}
\label{sec:nys}
The Nystr\"om approximation, originally developed for approximating the eigenfunctions of integral operators, has emerged as an indispensable technique for generating low-rank approximations of kernel matrices. For a kernel matrix $\mathbf{K} \in \mathbb{R}^{n \times n}$, the Nystr\"om approximation takes the following form \cite{martinsson_tropp_2020}:
\begin{equation}
\label{eq:nys-def}
    \mathbf{K}_{\text{nys}} = \mathbf{K} \mathbf{X} (\mathbf{X}^{\top} \mathbf{K} \mathbf{X})^{\dagger} (\mathbf{K} \mathbf{X})^{\top},
\end{equation}
where $\mathbf{X} \in \mathbb{R}^{n \times k}$ is a test matrix, and $\cdot^{\dagger}$ denotes the pseudoinverse. The choice of $\mathbf{X}$ significantly influences the Nystr\"om approximation accuracy and efficiency. Employing $\mathbf{X}$ as a submatrix of a permutation matrix facilitates the sampling of $k$ columns from $\mathbf{K}$ and $\mathbf{X}^{\top} \mathbf{K} \mathbf{X}$ yields a $k \times k$ submatrix. This sampling-based Nystr\"om approximation offers a geometric insight when examining a kernel matrix over a dataset $X = \{\mathbf{x}_i\}_{i=1}^{n}$, equating the sampling of $k$ columns from $\bK$ to the selection of $k$ points from $X$. These points, referred to as landmark points, are crucial for the accuracy of the approximation.
Let $X_k$ represent the $k$ samples from the dataset $X$, and assume the $k\times k$ submatrix is invertible,  \eqref{eq:nys-def} can be rewritten as:
\begin{equation}
\label{eq:sampling-based-nystrom}
  \mathbf{K}_{\text{nys}} =  \mathbf{K}_{X,X_k} \mathbf{K}_{X_k,X_k}^{-1} \mathbf{K}_{X_k,X},
\end{equation}
where $\mathbf{K}_{X,Y}$ encapsulates $[\mathcal{K}(x,y)]_{x\in X,\,y\in Y}$ for any two datasets $X$ and $Y$. With the landmark points indexed first, $\mathbf{K}$ can be partitioned into a block 2-by-2  matrix:
\begin{equation}
  \mathbf{K} = 
 \begin{bmatrix}
  \mathbf{K}_{11} & \mathbf{K}_{12}\\
  \mathbf{K}_{12}^{\top} & \mathbf{K}_{22}
\end{bmatrix},
\end{equation}
where $\mathbf{K}_{11} = \mathbf{K}_{X_k,X_k}$, $\mathbf{K}_{12} = \mathbf{K}_{X_k,{X}\setminus X_{k}}$, and $\mathbf{K}_{22} = \mathbf{K}_{{X}\setminus X_{k},{X}\setminus X_{k}}$. Consequently, the sampling-based Nystr\"om approximation approximates $\mathbf{K}_{22}$ with $\mathbf{K}_{12}^{\top} \mathbf{K}_{11}^{-1} \mathbf{K}_{12}$:
\begin{equation}
\mathbf{K}_{\text{nys}} =  
\begin{bmatrix}
\mathbf{K}_{11} & \mathbf{K}_{12} \\
\mathbf{K}_{12}^{\top} & \mathbf{K}_{12}^{\top} \mathbf{K}_{11}^{-1} \mathbf{K}_{12}
\end{bmatrix}.
\end{equation}
This strategy underscores the importance of landmark point selection in formulating an effective approximation.
A different approach to constructing Nystr\"om approximations utilizes the projection method where alternative test matrices, such as Gaussian random matrices, are utilized in this context. However, the projection-based Nystr\"om approximation incurs a higher computational cost due to explicit matrix multiplications, unlike its sampling counterpart. Nevertheless, this method has stronger theoretical foundations, and the approximation error can be close to that of the best rank-$k$ approximation attainable via singular value decomposition (SVD) \cite{martinsson_tropp_2020}.
Note that direct implementation of projection-based Nystr\"om approximation following Equation \eqref{eq:nys-def} may result in numerical instability. As a result, the following form is recommended \cite{frangella_randomized_2021}:
\begin{equation}
\label{eq:random-nys}
\mathbf{K}_{\text{nys}} = \mathbf{U} \widehat{\mathbf{\Lambda}} \mathbf{U}^\top,
\end{equation}
where $\mathbf{U}$ comprises orthonormal columns, and $\widehat{\mathbf{\Lambda}}$ is a diagonal matrix. The  details for constructing $\mathbf{U}$ and $\widehat{\mathbf{\Lambda}}$ can be found in \cite{frangella_randomized_2021}. When employing Nystr\"om approximation as a preconditioner for Preconditioned CG (PCG) to solve \eqref{eq:Problem}, in conjunction with the Sherman–Morrison–Woodbury (SMW) formula and the orthonormality of $\mathbf{U}$, the inverse of the preconditioner $\mathbf{U} \widehat{\mathbf{\Lambda}} \mathbf{U}^\top + \mu \mathbf{I}$ can be expressed as:
\begin{equation}
(\mathbf{U} \widehat{\mathbf{\Lambda}} \mathbf{U}^\top + \mu \mathbf{I})^{-1} =
\mathbf{U}(\widehat{\mathbf{\Lambda}}+\mu\mathbf{I})^{-1}\mathbf{U}^\top+\frac{1}{\mu}(\mathbf{I}-\mathbf{U}\mathbf{U}^\top).
\label{eq:nystrom1-smw}
\end{equation}
Equation \eqref{eq:nystrom1-smw} is valid for any low-rank approximation of $\mathbf{K}$, provided $\mathbf{U}$ is orthonormal. While applying SMW formula directly to Equation \eqref{eq:sampling-based-nystrom} based on the sampling-based Nystr\"om preconditioner seems plausible,  this implementation is not numerically stable. Therefore,  \eqref{eq:nystrom1-smw} is preferred for both sampling-based and projection-based Nystr\"om preconditioners.
Preconditioners leveraging Nystr\"om approximations and other low-rank approaches for the kernel matrix $\mathbf{K}$ typically involve eigendecomposition or another factorization of a dense $k \times k$ matrix, which is feasible for small $k$ but becomes prohibitively expensive for large $k$ where $k$ is often set to be the numerical rank.  The numerical rank is defined as \(\#\{\lambda_i > c\mu \mid \lambda_i\) are the eigenvalues of \(\mathbf{K} + \mu\mathbf{I}, \forall i \in [n]\}\), where \(\#\{\cdot\}\) denotes the count of eigenvalues exceeding \(c\mu\). %Here, \(c\) is a constant that adjusts the sensitivity of this definition; specifically, a higher \(c\) leads to an underestimation of the numerical rank, while a lower \(c\) predisposes to its overestimation.
This definition bears resemblance to the concept of the effective dimension defined as $\Tr{\bK(\bK+c\mu)^{-1}} = \sum_{i=1}^{n}\frac{\lambda_i}{\lambda_i + c\mu}$ \cite{frangella_randomized_2021}. However, we found the numerical rank definition serves our purpose of adaptively estimating the rank better. In Section~\ref{sec:nystrom}, we propose a block 2-by-2 approximate factorization of $\mathbf{K} + \mu \mathbf{I}$ as a preconditioner, with the (1,1) block corresponding to the set of landmark points. Additionally, a method for estimating the number of landmark points is presented in Section~\ref{subsec: nys-rank}. The selection of landmark points based on Farthest Point Sampling (FPS) is supported by the analysis in Section~\ref{sec:theory}. The efficiency of the proposed preconditioner is demonstrated in Section~\ref{sec:experiment}, while Section \ref{sec:conclusion} summarizes this paper's contributions.

%%%%%%%%%%%%%%%%%%%%%%%%%%%%%%%%%%%%%%%%%%%%%%%%%%%%%%%%%%%%%%%%%%%%%%%%%%%%%%%%

\section{Adaptive Factorized Nystr\"om (AFN) preconditioner}
\label{sec:nystrom}
This section outlines the development of the proposed preconditioner, initially motivated by the screening effect \cite{stein2002screening,stein20112010,stein2015does} and the FSAI method \cite{fsai}. Building on these concepts, we introduce the Adaptive Factorized Nystr\"om preconditioner (AFN), designed to enhance computational efficiency through a factorized matrix representation. To enhance performance across different kernel parameters, we also propose a rank estimation technique. 
%%%%%%%%%%%%%%%%%%%%%%%%%%%%%%%%%%%%%%%%%%%%%%%%%%%%%%%%%%%%%%%%%%%%%%%%%%%%%%%%%
\subsection{Screening effect and FSAI}
\label{subsec:screening effect and fsai}
In Gaussian processes, the kernel matrix \(\mathbf{K}_{XX}\) is used to characterize the covariance of the target values $\by = \{y_i\}_{i=1}^{n}$ of the data points $X = \{\bx_i\}_{i=1}^{n}$. Since $\Schur$ can be considered as the conditional covariance matrix of target values at $X\backslash X_k$ conditioned on those at $X_k$, 
the screening effect \cite{guinness2018permutation,pourahmadi1999joint,stein2002screening,stein20112010,stein2015does,schafer_compression_2021,schafer_sparse_2021} in geostatistics implies that optimal linear predictions at a point in a Gaussian process primarily rely on nearby data points. While the theory provides specific conditions for this effect, it is practically leveraged to improve the computational efficiency of Gaussian process regression. The Vecchia approximation \cite{vecchia_estimation_1988}, rooted in this concept, simplifies joint density calculations by conditioning on neighboring points, leading to a sparse Cholesky factorization of the precision matrix. However, the approximation accuracy depends on the strength of the screening effect and the number of neighboring points considered.  More specifically, the screening effect suggests that the optimal linear prediction of the target value $y_i$ at a point $\bx_i$ in a Gaussian process typically depends on the values at neighboring points $N_i = \{\bx_j|j\in D_{i}\}$ where $D_i$ is the indices of the nearest neighbors of $\bx_i$ which can be determined using K-Nearest Neighbors (KNN) algorithm. This has been theoretically scrutinized and conditions for its validity have been established, albeit under limited scenarios \cite{stein2002screening, stein20112010, stein2015does}. The Vecchia approximation \cite{vecchia_estimation_1988} utilizes this principle by approximating the exact joint density $p_1(\by)=p(y_1)\prod_{i=2}^{n}p(y_i | y_{1:i-1})\sim \mathcal{N}(\bo,\bK)$ with  $p_2(\by) = p(y_1) \prod_{i = 2}^{n} p(y_i | y_{N_i}) \sim \mathcal{N}(\bo , \hat{\bK})$, significantly simplifying calculations. This approximation yields a precision matrix  $\hat{\bK}^{-1}$   with a sparse Cholesky decomposition, where the Cholesky factor has a limited number of non-zero entries per row, equal to the size of $N_i$ \cite{datta2016hierarchical,katzfuss2021general}. While recent studies \cite{schafer_compression_2021,schafer_sparse_2021} confirm that the screening effect is valid for functions derived from Green's functions of elliptic operators, it is important to note that when the effect is weak or absent, the approximation will not be very accurate. Despite its theoretical limitations, the Vecchia approximation has recently achieved empirical success in numerous applications \cite{schafer_sparse_2021,katzfuss2021general,guinness2018permutation}. Remarkably, this approximation is equivalent to the Factorized Sparse Approximate Inverse (\texttt{FSAI}) method introduced by Kolotilina and Yeremin \cite{fsai}.
Therefore, based on the screening effect, we can use \texttt{FSAI} to compute
a sparse approximate inverse $\bG$ of the lower triangular Cholesky factor of
a symmetric positive definite (SPD) matrix $\bK_{22} + \noise -\bK_{21}(\bK_{11}+\noise)^{-1}\bK_{12}$, 
given a sparsity pattern $\mathbf{S}$ for $\bG$, i.e.,
$\mathbf{G^\top G} \approx (\bK_{22} + \noise -\bK_{21}(\bK_{11}+\noise)^{-1}\bK_{12})^{-1}$.
An important feature of \texttt{FSAI} is that the computation
of $\bG$ only requires the entries of $\bK_{22} + \noise -\bK_{21}(\bK_{11}+\noise)^{-1}\bK_{12}$ corresponding
to the sparsity pattern of $\bG$ and $\bG^\top$. This makes it possible to
economically compute $\bG$ even if $\bK_{22} + \noise -\bK_{21}(\bK_{11}+\noise)^{-1}\bK_{12}$ is large and dense.
Further, the computation of each row of $\bG$ is independent of other
rows and thus the rows of $\bG$ can be computed in parallel.  The nonzero pattern used for row $i$ of $\mathbf{G}$ corresponds to
the $w-1$ nearest neighbors of point $i$ that are numbered less than $i$
(since $\bG$ is lower triangular), where $w$ is a parameter which controls the nonzeros per row. The pseudocode of \texttt{FSAI} can be found in Algorithm \ref{alg:fsai}.
\begin{algorithm}[H]
\caption{Factorized Sparse Approximate Inverse (\texttt{FSAI})}
\begin{algorithmic}[1]
\STATE{\textbf{Input}: A symmetric positive definite matrix \(\mathbf{K}\), the number of nonzeros on each row $w$, a lower triangular sparsity pattern $\mathbf{S} \in \mathbb{R}^{n \times n}$ which is a zero-one matrix with at most $w$ ones on each row indicating the positions of $w$-nearest neighbors of each data point.
}
\FOR{$i=1$ to $n$}
    \STATE{Extract the non-zero pattern $\mathbf{s}_{i}$ from the $i$th row of $\mathbf{S}$ with length $w \ll n$}
    \STATE{Compute $\mathbf{G}_{i, \mathbf{s}_{i}} =\frac{\mathbf{e}_{w}^\top(\mathbf{K}_{\mathbf{s}_{i}, \mathbf{s}_{i}})^{-1}}{\sqrt{\mathbf{e}_{w}^{\top} (\mathbf{K}_{\mathbf{s}_{i},\mathbf{s}_{i}})^{-1}\mathbf{e}_{w}}}$}
\ENDFOR
\STATE{\textbf{Return}: $\mathbf{G}$}
\end{algorithmic}
\label{alg:fsai}
\end{algorithm}
In the next section, we will discuss how to apply \texttt{FSAI} on the $\Schur$ to improve the robustness and accuracy of the Nystr\"om approximation.
% Let $\Knys = \widetilde{\bU} \mathbf{\Lambda} \widetilde{\bU}^\top$
% denote the Nystr\"om approximation \eqref{eq:nystrom0}.
% The approximation is mathematically equal to \cite{nys2001}
% \begin{equation}
% \label{eq:nystrom-factorization}
%   \Knys =  \bK_{X,X_k} \bK_{X_k,X_k}^{-1} \bK_{X_k,X}
% \end{equation}
% where the notation was defined in the previous section.
% Without loss of generality, if the landmark points are indexed first,
% we can partition $\bK$ into the block 2-by-2 form
% \begin{equation}
% \label{eq:schur-factorization}
%   \bK = 
%  \begin{bmatrix}
%   \bK_{11} & \bK_{12}\\
%   \bK_{12}^{\top} & \bK_{22}
% \end{bmatrix},
% \end{equation}
% where $\bK_{11} = \bK_{X_k,X_k}$, $\bK_{12} = \bK_{X_k,{X}\backslash X_{k}}$
% and   $\bK_{22} = \bK_{{X}\backslash X_{k},{X}\backslash X_{k}}$.
% In this notation,
% \begin{equation}
% \label{eq:nystrom-factorization-2by2}
% \Knys =  
% \begin{bmatrix}
% \bK_{11} & \bK_{12} \\
% \bK_{12}^{\top} & \bK_{12}^{\top} \bK_{11}^{-1} \bK_{12}
% \end{bmatrix}.
% \end{equation}
% The difference $\bK - \Knys$ is positive semidefinite.
\subsection{AFN preconditioner construction and application}
%The Nystr\"om preconditioner for $\bK+\mu \mathbf{I}$ is
%$\Knys+\mu \mathbf{I}$. For $\Knys$ given in \eqref{eq:random-nys}, 
%Equation \eqref{eq:sampling-based-nystrom},
%solving with the Nystr\"om preconditioner
%via the SMW formula requires applying the operator \eqref{eq:nystrom1-smw}
%\begin{equation}
%(\Knys +  \mu\bI)^{-1}=\frac{1}{\mu}\bI - \frac{1}{\mu^2}\begin{bmatrix}
%\bK_{11} & \bK_{12} \\
%\end{bmatrix}^\top( \bK_{11} + \frac{1}{\mu}(\bK_{11}^2+\bK_{12}\bK_{12}^\top))^{-1}\begin{bmatrix}
%\bK_{11} & \bK_{12} \\
%\end{bmatrix}
%\label{eq:smw}
%\end{equation}
%to a vector. 
%The matrix $(\bK_{11} + \frac{1}{\mu}(\bK_{11}^2+\bK_{12}\bK_{12}^\top))$
%is often ill-conditioned,
%but the ill-conditioning can be ameliorated \cite{shabat_fast_2021}
%if the matrix is not too large (i.e., $k$ is not too large)
%and the Cholesky factorization of $\bK_{11}$ can be computed rapidly.
{
We now propose a new preconditioner for $\bK + \mu \bI$ that can
be efficiently constructed even when $k$ is large. Recall that $\bK_{11}$ is the kernel matrix associated with 
a set of landmark points $X_k$. To control the computational cost, we impose a limit on the maximum size of $X_k$ setting it to a constant value, such as $2000$. For any SPD matrix $\mathbf{A}$, the Cholesky decomposition ensures that the matrix can be factored as $\mathbf{A} = \mathbf{L}\mathbf{L}^{\top}$, where $\mathbf{L}$ is a lower triangular matrix. Let $\mathbf{LL^\top}$ be the Cholesky factorization
of $\bK_{11} + \mu \bI$ and $\mathbf{G^\top G}$ be the \texttt{FSAI}
of $(\bK_{22}+\mu\mathbf{I}-\bK_{12}^{\top}(\bK_{11}+\mu\bI)^{-1}\bK_{12})^{-1}$.
Then we can define the following factorized preconditioner for $\bK + \mu \bI$:
\begin{equation}
\mathbf{M} =
\begin{bmatrix}
 \bL                   & \mathbf{0} \\
  \bK_{12}^{\top} \bL^{-\top} & \bG^{-1}
\end{bmatrix}
\begin{bmatrix}
  \bL^{\top} & \mathbf{L}^{-1}\bK_{12}\\
  \mathbf{0}         & \bG^{-\top}
\end{bmatrix} .
\label{eq:factorized-form}
\end{equation}
Expanding the factors,
\begin{align}
\mathbf{M}& =
\begin{bmatrix}
  \bK_{11} + \mu\bI & \bK_{12} \\
  \bK_{12}^{\top} & (\mathbf{G}^\top \mathbf{G})^{-1} + \bK_{12}^{\top} (\bK_{11}+\mu \bI)^{-1} \bK_{12} 
\end{bmatrix} \\
&= \Knys + \mu \bI + 
\underbrace{\begin{bmatrix}
   \bo & \bo \\
  \bo & (\mathbf{G}^\top \mathbf{G})^{-1} + \bK_{12}^{\top} \left((\bK_{11}+\mu \bI)^{-1}-(\bK_{11})^{-1}\right) \bK_{12}-\noise
\end{bmatrix}}_{\text{Correction term}},
\label{eq:realM}
\end{align}
we see that $\mathbf{M}$ equals 
$\Knys + \mu\bI$ plus a correction term. Thus the preconditioner is not a Nystr\"om preconditioner but has
similarities to it. 
Unlike a Nystr\"om preconditioner,
the factorized form approximates $\bK + \mu \bI$ entirely and does not
approximate $\bK$ separately, and thus avoids the SMW formula. In particular, when we have
$\mathbf{G^\top G} = (\bK_{22} + \noise -\bK_{12}^{\top}(\bK_{11}+\noise)^{-1}\bK_{12})^{-1}$ exactly, $\bM=\bK+\noise$. }
%\begin{remark}
%In the sampling-based Nyström approximation, the error arises from using $\mathbf{K}_{12}^\top \mathbf{K}_{11}^{-1} \mathbf{K}_{12}$ to approximate $\mathbf{K}_{22}$. In the AFN approach, this approximation challenge is addressed by applying $\mathbf{G}^\top \mathbf{G}$ to approximate $(\mathbf{K}_{22} + \noise - \mathbf{K}_{12}^\top (\mathbf{K}_{11} + \noise)^{-1} \mathbf{K}_{12})^{-1}$. Alternatively, consider the matrix
%\[
%\begin{bmatrix}
%\mathbf{K}_{11} + \mu\mathbf{I} & \mathbf{K}_{12} \\
%\mathbf{K}_{12}^\top & \mathbf{K}_{12}^\top (\mathbf{K}_{11}+\mu \mathbf{I})^{-1} \mathbf{K}_{12}
%\end{bmatrix}
%\]
%as a Nyström approximation of $\mathbf{K} + \mu\mathbf{I}$. The (2,2) block's approximation is then refined using $(\mathbf{G}^\top \mathbf{G})^{-1}$, which yields an elegantly factorizable outcome.
%\end{remark}

We call the preconditioner defined in \eqref{eq:factorized-form} the Adaptive Factorized Nystr\"om (\texttt{AFN}) preconditioner and summarize its construction procedure in Algorithm \ref{alg:afn}. Owing to the factorization structure, it is sufficient to store $\mathbf{L}$ and $\mathbf{G}$ from Algorithm \ref{alg:afn}. 
\begin{algorithm}
    \caption{Adaptive Factorized Nystr\"om (\texttt{AFN}) preconditioner construction}
    \begin{algorithmic}[1]
    \STATE{\textbf{Input}: Dataset $X = \{\bx_i\}_{i=1}^{n}$ for $\bx_i\in \mathbb{R}^{d}$, Kernel function $\mathcal{K}(\cdot,\cdot)$, regularization parameter $\mu$, estimated rank $k$ returned by Algorithm \ref{alg:nys-estimation}}.
    \IF{$d \leq 10$}
    \STATE Select $k$ landmark points denoted by $X_k$ according to FPS Algorithm \ref{alg:fps}.
    \ELSE
    \STATE Select $k$ landmark points denoted by $X_k$ using uniform sampling.
    \ENDIF
    \STATE{Perform Cholesky factorization: $\mathbf{L} = \operatorname{Chol}(\mathbf{K}_{11} + \mu \mathbf{I})$ where $\bK_{11} = \mathcal{K}(X_k,X_k)$ }
    \STATE{Invoke Algorithm \ref{alg:fsai} to compute $\mathbf{G} = \operatorname{FSAI}(\Schur)$ where $\bK_{12} = \mathcal{K}(X_k,X_r)$, $\bK_{22} = \mathcal{K}(X_r,X_r)$, $X_r = X\setminus X_k$.}
    \STATE{\textbf{Return}: Matrices $\mathbf{L}$ and $\mathbf{G}$}
    \end{algorithmic}
    \label{alg:afn}
    \end{algorithm}

The preconditioning operation for \texttt{AFN}
solves systems with the matrix $\mathbf{M}$. Assuming that the vectors $\br$
and $\bs$ are partitioned conformally with the block structure of $\mathbf{M}$,
then to solve the system
\[
\mathbf{M}
\begin{bmatrix}
\bs_1 \\ \bs_2
\end{bmatrix}
=
\begin{bmatrix}
\br_1 \\ \br_2
\end{bmatrix},
\]
{
the algorithm is
\begin{align*}
\bs_2 & := \bG^{\top}\bG \left( \br_2 - \bK_{12}^\top (\bL^{-\top} \bL^{-1}) \br_1 \right), \\
\bs_1 & := \bL^{-\top} \bL^{-1} ( \br_1 - \bK_{12} \bs_2 ) .
\end{align*}
}

The pseudocode detailing the application of the \texttt{AFN} preconditioner is described in Algorithm \ref{alg:pcg-afn-apply}.
\begin{algorithm}[H]
        \caption{Adaptive Factorized Nystr\"om (\texttt{AFN}) preconditioner application}
        \begin{algorithmic}[1]
        \STATE{\textbf{Input}: Vector $\mathbf{r}$, matrices $\mathbf{L}$, $\mathbf{G}$, $\mathbf{K}_{12}$}
        \STATE{Partition $\mathbf{r}$ conformally with the size of $\mathbf{L}$ and $\mathbf{G}$ as $[\mathbf{r}_1,\mathbf{r}_2]^{\top}$}
        \STATE{Solve $(\mathbf{K}_{11} + \mu \mathbf{I}) \mathbf{z} = \mathbf{r}_1$ by computing $\mathbf{z} = \mathbf{L}^{-\top}\mathbf{L}^{-1} \mathbf{r}_1$}
        \STATE{Compute  $\mathbf{s}_2 = \mathbf{G}^{\top}\mathbf{G}( \mathbf{r}_2 - \mathbf{K}_{12}^\top \mathbf{z} )$}
        \STATE{Solve $(\mathbf{K}_{11} + \mu \mathbf{I}) \mathbf{s}_1 = (\mathbf{r}_1 - \mathbf{K}_{12} \mathbf{s}_2 )$ by computing $\mathbf{s}_1 = \mathbf{L}^{-\top} \mathbf{L}^{-1} (\mathbf{r}_1 - \mathbf{K}_{12} \mathbf{s}_2 )$}
        \STATE{\textbf{Return}: Vector $\mathbf{s} = [\mathbf{s}_1, \mathbf{s}_2]^{\top}$}
        \end{algorithmic}
        \label{alg:pcg-afn-apply}
        \end{algorithm}
In the next two sections, we will discuss two pivotal components in the construction of \texttt{AFN}: the selection of an optimal number of landmark points and their identification method. Addressing these components is essential for refining the Nystr\"om approximation and, by extension, the efficacy of the \texttt{AFN} method. The choice of landmark points, specifically the determination of the block size of \(\mathbf{K}_{11}\), plays a critical role. An underestimation may lead to inadequate approximations, whereas an overestimation could introduce unnecessary computational burdens and potential numerical instability. To navigate this challenge, we propose Algorithm~\ref{alg:nys-estimation} for empirically estimating the optimal number of landmark points, a process elaborated in Section~\ref{subsec: nys-rank}.
It is also equally crucial to devise a strategy for selecting these landmark points. For low dimensional data, we advocate for FPS due to its balance of simplicity, efficiency, and effectiveness, a rationale further explored in Section \ref{sec:theory}. Conversely, for datasets with high-dimensional features, we recommend uniform sampling as a means to alleviate the computational cost.
%%%%%%%%%%%%%%%%%%%%%%%%%%%%%%%%%%%%%%%%%%%%%%%%
%%%%%%%%%%%%%%%%%%%%%%%%%%%%%%%%%%%%%%%%%%%%%%%%
\subsection{Adaptive choice of approximation rank}
\label{sec:adaptive}
To construct a preconditioner that is adaptive and efficient
for a range of regularized kernel matrices arising from different values of the
kernel function parameters, it is necessary to estimate the rank
of the kernel matrix $\bK$. {For example, if the estimated rank is small enough that it is inexpensive to perform an eigendecomposition of a $k$-by-$k$ matrix, then the Nystr\"om preconditioner should be used due to the reduced construction cost.}

\label{subsec: nys-rank}
It is of course too costly in general to use a rank-revealing
decomposition of $\bK$ to compute $k$. Instead, we will compute $k$
that approximately achieves a certain Nystr\"om approximation accuracy via checking the relative Nystr\"om approximation error on a subsampled dataset.

First, a dataset $X_m$ of $m$ points is randomly subsampled from $X$. The number of points $m$ is an input to the procedure, and $m$ can be much smaller than the $k$ that will be computed. Then the coordinates of the data points in $X_m$ are scaled by $(m/n)^{1/d}$ and the smaller kernel matrix $\bK_{X_m,X_m}$ is formed. The rationale of this scaling is that we expect the spectrum of $\bK_{X_m,X_m}$ has a similar decay pattern as that of $\bK_{X,X}$. We now run FPS on $X_m$ to construct Nystr\"om approximations with increasing rank to $\bK$ until the relative Nystr\"om approximation error falls below $0.1$ and define this Nystr\"om rank as $r$. Finally, we approximate the Nystr\"om rank of $\bK$ as $rn/m$. Figure \ref{fig:fd2nyserror} plots the Nystr\"om approximation errors on subsampled matrices and original matrices associated with two different length-scales. The data points $X$ are generated randomly by sampling $1000$ points uniformly within a cube and $m=100$ points are subsampled randomly. The two relative Nystr\"om approximation error curves show a close match in both cases. This rank estimation method is summarized in Algorithm \ref{alg:nys-estimation}. We also find that if the estimated rank is small (e.g., less than $2000$), we can perform an eigen-decomposition of $\bK_{{X}_m,{X}_m}$ associated with the unscaled data points and refine the estimation with the number of eigenvalues greater than $0.1\mu$.  Here we estimate the numerical rank defined in the previous section with $c = 0.1$ to have a refined rank estimation to further enhance performance.
\iffalse
There are two ways to recover this number. The first method is to find the $r-$th fill distance of the subsampled dataset, denoted by $\Tilde{h}_{r}$. We then run FPS on the original dataset until the fill distance decreases below $\Tilde{h}_{r}$, and record the corresponding number of points as $\operatorname{Fd}(r)$ and let $\Tilde{r}=\operatorname{Fd}(r)$. This number will be used for \texttt{AFN}. The rationale behind this method is based on the similar fill distance distribution of subsampled and original datasets, as well as Theorem \ref{thm:nystrom-bound}. Theorem \ref{thm:nystrom-bound} states that the \Nys approximation error is upper-bounded by $\sqrt{n|\bK|}C^{\prime}exp(-C^{\prime\prime})/h_{X_k}$. As the constants do not depend on the dataset for a fixed domain, the error is controlled by fill distance. Therefore, datasets with the same fill distance will have roughly the same \Nys approximation error. The algorithm is outlined 
in Algorithm \ref{alg:fill-distance-estimation}. 
\fi
\begin{figure}[t]
   %\begin{minipage}{0.476\textwidth}
      %\centering
      %\includegraphics[width=.99\linewidth]{}
   %\end{minipage}
     \begin{minipage}{0.47\textwidth}
      \centering
      \includegraphics[width=0.99\linewidth]{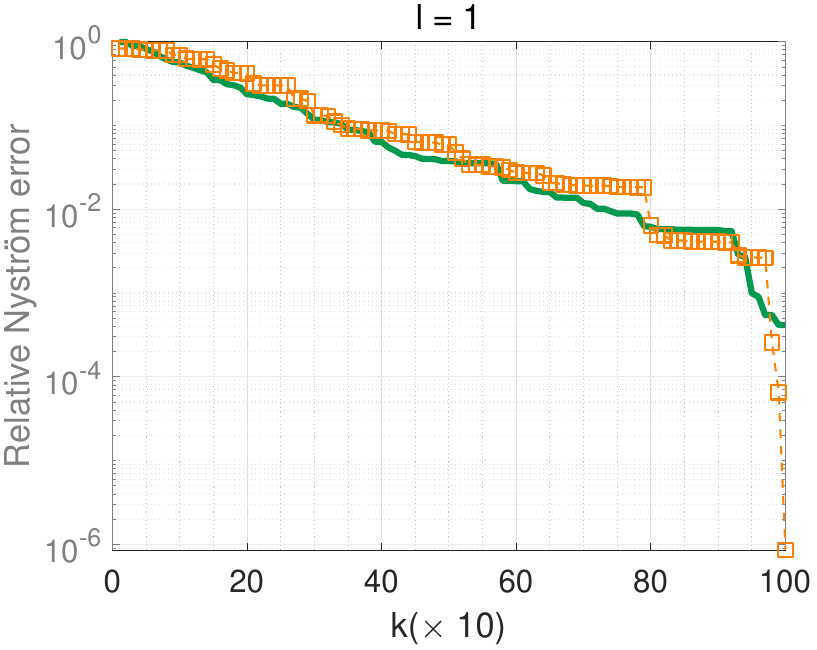}
   \end{minipage}
     \begin{minipage}{0.47\textwidth}
      \centering
      \includegraphics[width=.99\linewidth]{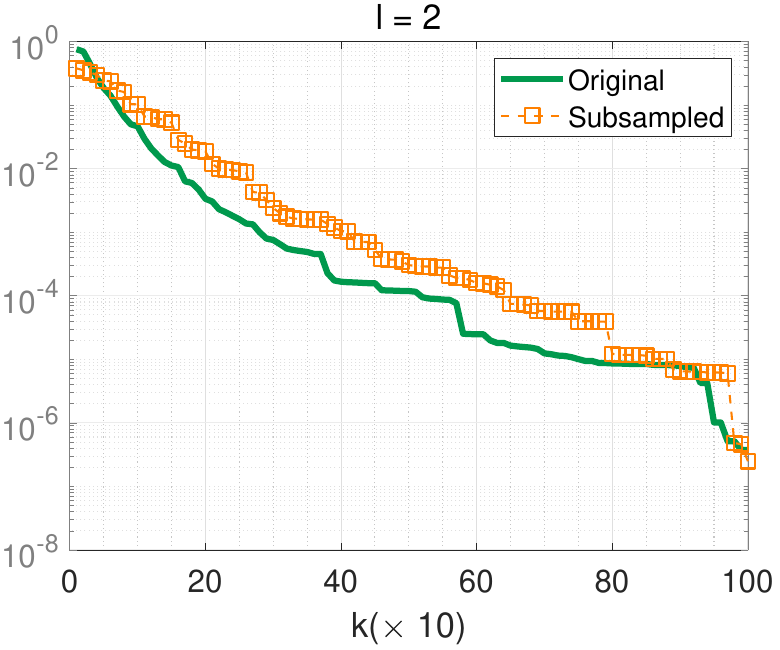}
   \end{minipage}
\caption{Comparison of the relative Nystr\"om approximation error curves for an original dataset and a subsampled dataset with $100$ points, associated with two different length-scales. The original dataset contains 1000 uniformly sampled points from a cube with edge length $10$. The indices of the subsampled dataset are matched with those of the original dataset by computing the  relative Nystr\"om approximation errors on the original dataset only for ranks that are multiples of 10. The plot shows how the approximation error changes as the rank of the approximation increases.}

\label{fig:fd2nyserror}
\end{figure}

\iffalse
\begin{algorithm}
\caption{Estimating number of landmark points using fill distance.}
\begin{algorithmic}[1]
\STATE \textbf{Input}: dataset $X$ with size $n$, subsample size $m$, desired \Nys error tolerance $\epsilon$.
\STATE \textbf{Output}: The number of landmark points $\Tilde{r}$ to be used in \texttt{AFN}.
\STATE Randomly subsample a subset $X_m$ from $X$ and scale it by $(\frac{m}{n})^{1/d}$.
\STATE Run FPS on $X_m$ and record the fill distances. Reorder $X_m$ by FPS ordering and form the matrix $\bK_{\Tilde{X}m,\Tilde{X}m}$ on the reordered dataset $\Tilde{X}_{m}$.
\STATE Compute the \Nys approximation error for $\bK_{\Tilde{X}_m,\Tilde{X}_m}$ for a range of number of landmark points starting from 1 to $m$. Break if the desired \Nys error tolerance $\epsilon$ is reached and record the number of landmark points $r$. If the tolerance is never reached, return $n$ directly and end the algorithm.
\STATE Record the fill distance $\Tilde{h}_{r}$ corresponding to the $r-$th point in $\Tilde{X}_{m}$.
\STATE Run FPS on the original dataset $X$ and record the fill distances until $\Tilde{h}_{r}$ is reached. Return the size of the sampled set by FPS as the number of landmark points.
\end{algorithmic}
\label{alg:fill-distance-estimation}
\end{algorithm}
\fi
\begin{algorithm}
\caption{Nystr\"om rank estimation}
\begin{algorithmic}[1]
\STATE \textbf{Input}: Dataset $X$ with size $n$, subsample size $m$, and kernel function $\mathcal{K}(\bx,\by)$
\STATE \textbf{Output}: Approximate Nystr\"om rank $k$
\STATE Randomly subsample a subset $X_m$ of $m$ points from $X$ and scale the coordinates of $X_m$ by $({m}/{n})^{1/d}$
\STATE Form the $m\times m$ matrix $\bK_{{X}_m,{X}_m}$
\STATE Find the Nystr\"om rank $r$ such that the relative Nystr\"om approximation error for $\bK_{{X}_m,{X}_m}$ with FPS sampling implemented in Algorithm \ref{alg:fps} falls below $0.1$
%\STATE Compute $r$ as the minimum value of $2000m/rn$ and the first rank such that the relative Nystr\"om approximation error for $\bK_{{X}_m,{X}_m}$ with FPS sampling falls below $0.1$.
%\STATE Compute $k={rn}/{m}$
\IF {$k \ge 2000$}
  \STATE \textbf{Return} $k={rn}/{m}$ 
\ELSE
  \STATE Compute eigenvalues of $\bK_{{X}_m,{X}_m}$ associated with the unscaled data points 
  \STATE Return $k$, the number of eigenvalues greater than 0.1$\mu$
\ENDIF
\end{algorithmic}
\label{alg:nys-estimation}
\end{algorithm}
\iffalse
We evaluated this algorithm using the dataset of 1000 points randomly generated from a 3d unit cube. We subsampled the dataset to 50 points and 30 points, and applied both estimation methods to both subsampled datasets. The results are shown in Figure \ref{fig:testRankEst}. 
\begin{figure}
\includegraphics[width=.485\linewidth]{}
\includegraphics[width=.485\linewidth]{}
\caption{In the left plot, 300 points are randomly sampled from 3000 points generated from a 3d unit cube scaled by $100^{1/3}$. In the right plot, 30 points are randomly sampled from the same set of 3000 points. Algorithm \ref{alg:fill-distance-estimation} and \ref{alg:nys-estimation} are applied to estimate the number of landmark points needed. The x-axes represent the length-scales $l$ used for the Gaussian kernel functions and the y-axes show the estimated number of landmark points. The blue and red curves are the results from Algorithm \ref{alg:fill-distance-estimation} and \ref{alg:nys-estimation} respectively. The yellow curve represents the number of landmark points needed to reduce the \Nys approximation error of the original kernel matrix to below the desired error threshold of 1.}
\label{fig:testRankEst}
\end{figure}
\fi

If the estimated rank $k$ is smaller than $2000$, then the Nystr\"om preconditioner should be used. \texttt{AFN} is only constructed when the estimated rank exceeds $2000$ for better efficiency. The selection of the preconditioning method is shown precisely in Algorithm \ref{alg:strategy}.

The choice of the landmark points affects the accuracy of the
overall \texttt{AFN} preconditioner, just as this choice affects
the accuracy of the Nystr\"om preconditioners.  The sparsity and the conditioning of $\Schur$ generally
improves when more landmark points are chosen, which 
would on the other hand increase the computational cost and the instability of the Cholesky factorization of $\bK_{11}+\mu\bI$. Finally, we combine every component and summarize the PCG with \texttt{AFN} in Algorithm \ref{alg:strategy}. In the next section, the choice of landmark points
is discussed in light of these considerations. 
\begin{algorithm}
\caption{Preconditioned conjugate gradient with the proposed preconditioning scheme}
\begin{algorithmic}[1]
        \STATE{\textbf{Input}: Kernel matrix $\mathbf{K}$, regularization parameter $\mu$, right-hand side vector $\mathbf{b}$} \STATE Estimate numerical rank $k$ of $\bK$ with Algorithm \ref{alg:nys-estimation}
\IF {$k \ge 2000$}
 \STATE{Solve $(\mathbf{K} + \mu\mathbf{I}) \mathbf{a} = \mathbf{b}$ using PCG with the \texttt{AFN} preconditioner, applied as per Algorithm \ref{alg:pcg-afn-apply}}
        \ELSE
            \STATE{Solve $(\mathbf{K} + \mu\mathbf{I}) \mathbf{a} = \mathbf{b}$ using PCG with the column sampling-based Nystr\"om preconditioner, applied as per Equation \eqref{eq:nystrom1-smw}}
\ENDIF
\STATE \textbf{Return}: approximate solution vector
\end{algorithmic}
\label{alg:strategy}
\end{algorithm}

\iffalse
It is worth noting that one might try to solve with
the matrix $(\bK_{11} + \frac{1}{\mu}(\bK_{11}^2+\bK_{12}\bK_{12}^\top))$
from \eqref{eq:smw} using a preconditioned iterative method,
as an alternative to solving with $(\bK_{11}+ \mu \mathbf{I})$
in \texttt{AFN}.
Although $(\bK_{11}+\mu \bI)$ has the same size as 
$(\bK_{11} + \frac{1}{\mu}(\bK_{11}^2+\bK_{12}\bK_{12}^\top)$,
we observe that the former can have much more small entries
and can also be more well-conditioned than the latter
when the landmark points are selected in a systematic way.
\fi 

%%%%%%%%%%%%%%%%%%%%%%%%%%%%%%%%%%%%%%%%%%%%%%%%%%%%%%%%%%%%%%%%%%%%%%%%%%%%%%%%
\section{Selecting the landmark points}
\label{sec:theory}
\shifanrewriting{
Existing methodologies for sampling $k$ landmark points from a dataset with $n$ data points include uniform sampling \cite{nys2001}, the anchor net method \cite{DDFac,anchornet}, leverage score sampling \cite{nys2005,leverage2016,nys2017}, $k$-means-based sampling \cite{nys2010kmeans}, determinantal point process (DPP)-based sampling \cite{belabbas_spectral_2009}, and random pivoted Cholesky sampling \cite{chen2022randomly}.
Uniform sampling with the computational complexity $O(k)$ excels in scenarios such as kernel ridge regression applications where direct access to kernel matrices is available, and the data does not exhibit unbalanced clusters.  Nonetheless, its efficacy diminishes when faced with unbalanced clusters as it tends to oversample larger clusters.
To address this shortcoming, adaptive sampling techniques have been proposed. These methods, including leverage score sampling, DPP-based sampling, and random pivoted Cholesky sampling, employ non-uniform sampling distributions derived from kernel matrices. For instance, ridge leverage score sampling constructs the probability for sampling the $i$-th column proportional to the $i$-th diagonal entry of $(\bK + \mu \bI)^{-1} \bK$. In \cite{musco2017recursive}, a recursive sampling strategy was introduced, reducing the computational cost of ridge leverage score sampling to $O(nk)$ kernel evaluations and $O(nk^2)$ running time. $k$-DPP-based sampling extends the sampling distribution across all $k$-subsets of ${1, \dots, n}$, albeit at a much higher computational cost of $O(n^3)$. However, a Markov Chain Monte Carlo (MCMC) approach proposed in \cite{li_fast_2016} can reduce this cost to linear time under some conditions. Due to the challenges of verifying these conditions and the necessity to reevaluate a $k\times k$ determinant, $k$-DPP-based sampling has experienced limited acceptance in practice compared to other sampling methods.
Random pivoted Cholesky sampling, as presented in \cite{chen2022randomly}, introduces a method aligned with pivoted Cholesky procedures, where the $i$-th pivot is selected proportional to the magnitude of the diagonal entries of the Schur complement at  the $i$-th step. This method necessitates $O(n(k+1))$ kernel evaluations. 
Geometry-based sampling is another avenue, with $k$-means sampling clustering data points into $k$ clusters and utilizing the centroids as landmark points at a cost of $O(tkn)$, where $t$ represents the iteration count in Lloyd's algorithm. The Anchor Net method \cite{DDFac}, an efficient tactic to mitigate the limitations of uniform sampling in high-dimensional datasets, employs a low-discrepancy sequence to  diminish gaps and clusters compared to uniform sampling while maintaining robust space coverage, at a complexity of $O(nk)$.
In our proposed preconditioner, a few different sampling methods can be employed. We opt for Farthest Point Sampling (FPS) due to its simplicity, ease of use, cost-effectiveness, and independence from the length-scale parameter. Specifically, landmark points will be selected based on a balance between two geometric measures to ensure the preconditioner's effectiveness and robustness.
}

The first measure $h_{X_k}$, called \emph{fill
distance} \cite{meshfree,lazzaro2002radial}, is used to quantify how
well the points in $X_k$ fill out a domain $\Omega$:
\begin{equation}
h_{X_k} = \max_{\bx\in \Omega \backslash X_k} \operatorname{dist}(\bx, X_k),
\label{eq:filldistance}
\end{equation}
where
$\operatorname{dist}(\mathbf{x},Y) = \inf_{\by \in Y} \Vert \bx-\by\Vert$ is the distance between a point $\mathbf{x}$ and a set $Y$,
and where $\Omega$ denotes the domain of the kernel function under consideration which can be either a continuous region or a finite discrete set. {The geometric interpretation of this measure is the largest radius of an empty ball in $\Omega$ that does not intersect with $X_k$. This implies $X_k$ with a smaller fill distance will better fill out $\Omega$.

% \begin{figure}[H]
%    \begin{minipage}{0.48\textwidth}
%       \centering
%       \includegraphics[width=.99\linewidth]{}
%    \end{minipage}
%      \begin{minipage}{0.48\textwidth}
%       \centering
%       \includegraphics[width=.99\linewidth]{}
%    \end{minipage}
%     \caption{{Histogram of the magnitude of the entries in $\bK_{11}+\mu\bI$ and $\bK_{11} + \frac{1}{\mu}(\bK_{11}^2+\bK_{12}\bK_{12}^\top)$ associated with a Gaussian kernel matrix defined using $3000$ points sampled within a unit cube, regularization parameter $\mu=0.01$, and length-scale $l=\sqrt{20}$. Both matrices are scaled so that the diagonals are all ones.}}
%     \label{fig:sparsity SMW K11}
% \end{figure}

The second measure $q_{X_k}$, called \emph{separation distance} \cite{meshfree,lazzaro2002radial}, is defined as the distance between the closest pair of points in $X_k$:
\begin{align}
\label{eq-def: separation distance}
    q_{X_k} =\min_{\bx_{k_i},\bx_{k_j} \in X_{k}, k_i \neq k_j} \operatorname{dist}(\bx_{k_i},\bx_{k_j}).
\end{align}
The geometric interpretation of this measure is the diameter of the largest ball that can be placed around every point in $X_k$ such that no two balls overlap. A larger $q_{X_k}$ indicates that the columns in $\bK_{11}$ tend to be more linearly independent and thus leads to a more well-conditioned $\bK_{11}$. {Given that the separation distance serves as a metric for the conditioning of the kernel matrix \cite{wendland2006computational} and the conditioning of $\bK_{11}$ will affect the numerical stability of $\mathbf{L}$, a larger separation distance implies a more stable Nystr\"om approximation and a more stable AFN preconditioner.}

As more landmark points are sampled, both $h_{X_k}$ and $q_{X_k}$ tend to decrease. We wish to choose $X_k$ such that $h_{X_k}$ is small while $q_{X_k}$ is large. We will analyze the interplay between $h_{X_k}$ and $q_{X_k}$ in Section \ref{subsec:nearopt}. In particular, we will show that if $h_{X_k}\leq C q_{X_k}$ for some constant $C$, then $h_{X_k}$ and $q_{X_k}$ have the same order as the minimal value of the fill distance and the maximal value of the separation distance that can be achieved with  $k$ points, respectively.

Moreover, we find that FPS \cite{FPS97} can generate landmark points with $h_{X_k}\leq q_{X_k}$. FPS is often used in mesh generation \cite{FPS06} and computer graphics \cite{FPS11}. {In spatial statistics, FPS is also known as MaxMin Ordering (MMD) \cite{guinness2018permutation}.} FPS initializes $X_1$ with an arbitrary point $\bx_{k_1}$ in $X$ (better choices are possible). At step $i+1$, FPS selects the point that is farthest away from $X_{i}$
\begin{equation}
    \bx_{k_{i+1}} = \argmax_{\bx\in X \backslash X_{i}} \operatorname{dist}(\bx,X_{i}).
\end{equation}
See Figure \ref{fig:point-selection} for an illustration of FPS on a two-dimensional dataset and {the complete pseudocode of FPS in Algorithm \ref{alg:fps}.}
\begin{figure}
     \centering
     \includegraphics[width=.3\linewidth]{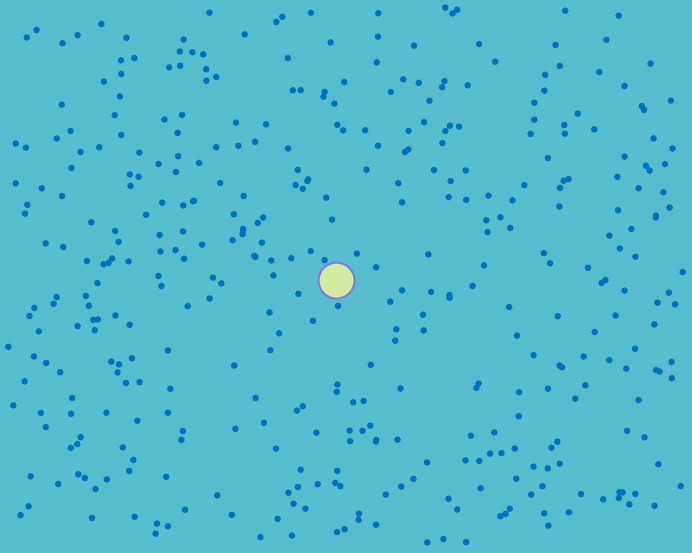} \hspace{0.5em}
     \includegraphics[width=.3\linewidth]{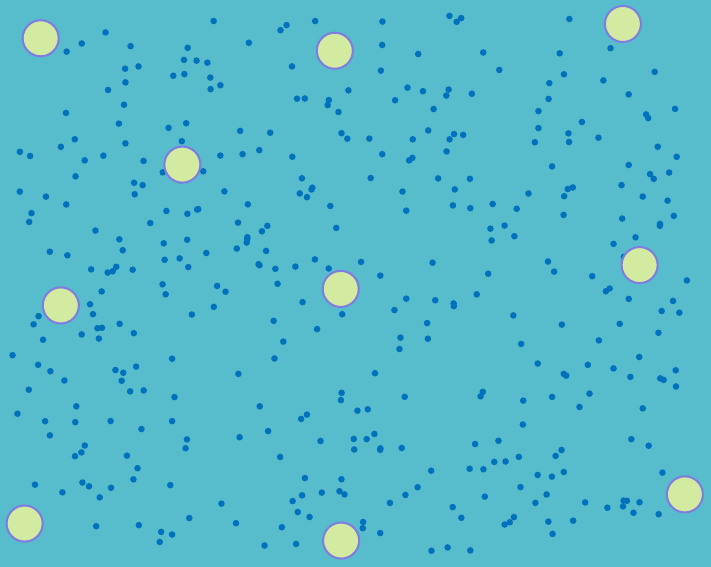} \\[1em]
     \includegraphics[width=.3\linewidth]{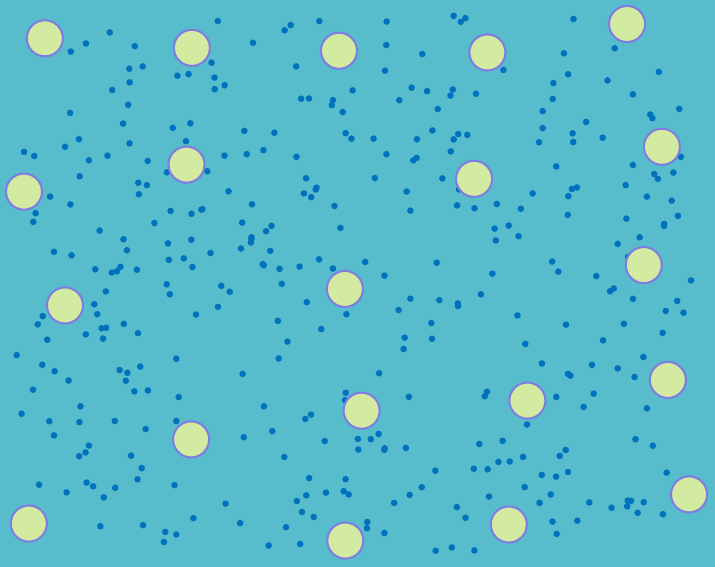} \hspace{0.5em}
     \includegraphics[width=.3\linewidth]{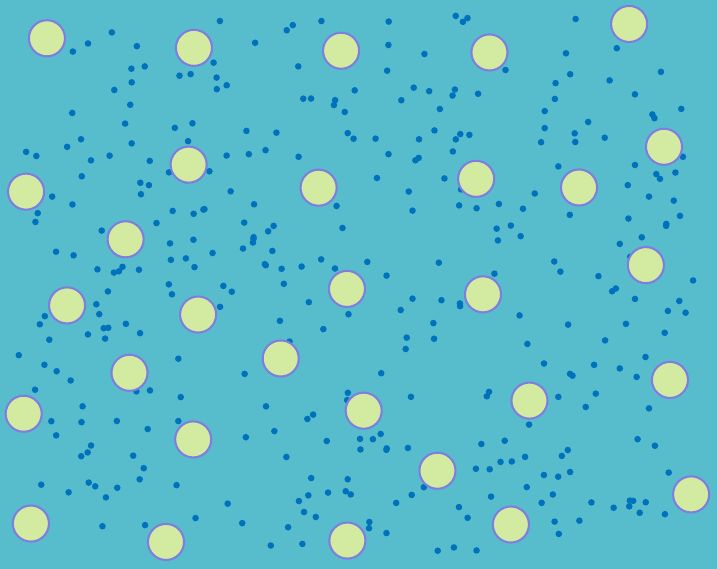}
    \caption{An illustration of FPS for selecting one, ten, twenty and thirty points from a two-dimensional dataset with $400$ points where the big circles represent the selected points and the dots denote the other data points.}
    \label{fig:point-selection}
\end{figure}
The landmark points, selected through  FPS, are distributed evenly across the dataset, avoiding the formation of dense clusters. This property will be analyzed in the next few sections.%We first study the relation between the fill distance and separation distance in Section \ref{subsec:nearopt} and This will be demonstrated in Section \ref{subsec:fps}. Based on this fact, we will justify the adoption of FPS for selecting landmark points in constructing the \texttt{AFN} preconditioner from various perspectives in Sections \ref{subsec:apprerr} and \ref{subsec:fps and screening}. Finally, an efficient implementation strategy is detailed in Section \ref{subsec:implementation of fps}.}

\subsection{Interplay between fill and separation distance}
\label{subsec:nearopt}
{In this section, we will study the relationship between $h_{X_{k}}$ and $q_{X_k}$. We will show that if $h_{X_k}\leq Cq_{X_k}$ for a constant $C$, then $h_{X_k}$ and $q_{X_k}$ will have the same order as the minimal fill distance and maximal separation distance that can be achieved with any subset with $k$ points, respectively.}

First notice that there exist a lower bound for $h_{X_{k}}$ and a upper bound for $q_{X_k}$, which is analyzed in the next theorem when all the points are inside a unit ball in $\mathbb{R}^d$.
\begin{theorem}
\label{thm:fill-separation worst case}
Suppose all the data points are inside a unit ball $\Omega$ in $\mathbb{R}^{d}$. Then for an arbitrary subset $X_{k} = \{\bx_{i_1},\dots,\bx_{i_k}\}$ of $X$, the following bounds hold for $h_{X_{k}}$ and $q_{X_{k}}$:
{
\begin{align}
    h_{X_{k}} \geq   k^{-1/d}\quad  \text{and} & \quad q_{X_{k}}\leq 6 k^{-1/d}.
\end{align}
}
\end{theorem}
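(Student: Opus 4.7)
The proof will proceed by two separate volume (packing versus covering) arguments, one for each inequality. Throughout, let $V_d$ denote the volume of the unit ball in $\mathbb{R}^d$, and let $B(\bx,r)$ denote the closed Euclidean ball of radius $r$ centered at $\bx$.

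For the upper bound on $q_{X_k}$, the plan is a standard packing argument. By the definition \eqref{eq-def: separation distance} of the separation distance, the open balls $B(\bx_{k_i}, q_{X_k}/2)$ for $i = 1,\dots,k$ are pairwise disjoint. Since each center $\bx_{k_i}$ lies inside the unit ball $\Omega$, every such ball is contained in the enlarged ball $B(\mathbf{0},\,1+q_{X_k}/2)$. Comparing volumes gives $k \, V_d \, (q_{X_k}/2)^d \leq V_d \, (1+q_{X_k}/2)^d$. Because $q_{X_k} \leq \operatorname{diam}(\Omega) = 2$, the factor $1+q_{X_k}/2$ is at most $2$, and rearranging yields $q_{X_k} \leq 4\,k^{-1/d}$, so one may take $C_{\Omega}^{\prime} = 4$. (If one wishes to keep the analysis for a general bounded domain $\Omega$ instead of specifically the unit ball, the constant only changes by a factor depending on the diameter and an interior-cone condition, which is why the theorem absorbs it into $C_\Omega^{\prime}$.)

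For the lower bound on $h_{X_k}$, the plan is the dual covering argument. By the definition \eqref{eq:filldistance} of the fill distance, every point $\bx \in \Omega$ lies within distance $h_{X_k}$ of some landmark point, so the balls $B(\bx_{k_i}, h_{X_k})$ for $i=1,\dots,k$ cover $\Omega$. Taking volumes,
\begin{equation*}
V_d \;=\; \mathrm{vol}(\Omega) \;\leq\; \sum_{i=1}^{k} \mathrm{vol}\bigl(B(\bx_{k_i}, h_{X_k})\bigr) \;=\; k \, V_d \, h_{X_k}^{d},
\end{equation*}
so $h_{X_k} \geq k^{-1/d}$, and we may take $C_\Omega = 1$ (again, in the general case the constant depends only on $\Omega$ through its volume).

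Neither step is really an obstacle: both are classical consequences of comparing a packing/covering to the total volume of $\Omega$. The only mild care required is in the packing argument, where the half-radius balls around the landmark points may protrude slightly outside $\Omega$, which is handled by inflating to $B(\mathbf{0}, 1+q_{X_k}/2)$ and then using $q_{X_k}\le 2$ to absorb the inflation into the constant. I would present the two bounds in parallel, emphasizing that the first is a packing bound and the second a covering bound, so that they conceptually frame the trade-off $h_{X_k}$ versus $q_{X_k}$ discussed in the surrounding text and motivate the forthcoming analysis of FPS in Section~\ref{subsec:fps}.
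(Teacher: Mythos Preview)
Your proof is correct and follows essentially the same packing/covering volume argument as the paper. The only minor variation is in the packing step for $q_{X_k}$: you enlarge the container to $B(\mathbf{0},\,1+q_{X_k}/2)$ to accommodate the protruding half-radius balls, whereas the paper instead intersects each ball $B(\bx_{k_i},q_{X_k}/2)$ with $\Omega$ and uses that at least half of its volume remains inside the unit ball, yielding the slightly sharper constant $C_\Omega' = 2^{(d+1)/d}$ in place of your $4$.
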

\begin{proof}
In order to show the lower bound of $h_{X_k}$, we first derive an upper bound of the volume of $\Omega$. Notice that $\Omega\subset \bigcup_{i = 1}^{k}B_{h_{X_k}}(\bx_{k_i})$ where $B_{h_{X_{k}}}(\bx_{k_i})$ is the ball centered at $\bx_{k_i}$ with radius $h_{X_k}$. Then 
\begin{align*}
  \text{Vol}(\Omega) &\leq \sum_{i = 1}^{k}\text{Vol}(B_{h_{X_k}}(\bx_{k_i}))= k\frac{\pi^{d/2}}{\Gamma(\frac{d}{2}+1)}h_{X_k}^{d}.
\end{align*}
This gives us the first bound.

% Define the radius of $\Omega$ as $r(\Omega) = \max_{\bx,\by \in \Omega}\frac{1}{2}\Vert \bx-\by \Vert$.
Similarly, we get an upper bound of $q_{X_k}$ by using the packing number bound from \cite{wu2016packing}:
% \begin{align*}
%      \text{Vol}(\Omega) &\geq \text{Vol}(\Omega \bigcap \bigcup_{i=1}^{k} B_{\frac{q_{X_k}}{2}}(\bx_{k_i})) = \sum_{i = 1}^{k}\text{Vol}(\Omega \bigcap B_{\frac{q_{X_k}}{2}}(\bx_{k_i})) & \\
%     &\geq \frac{1}{2}\sum_{i = 1}^{k} \text{Vol}(B_{\frac{q_{X_k}}{2}}(\bx_{k_i})) =  k\frac{\pi^{d/2}}{ 2\times 3^{d}\Gamma(\frac{d}{2}+1)}q_{X_{k}}^{d}.
% \end{align*}
\begin{align*}
    k  \leq \Big(\frac{3}{\frac{q_{X_k}}{2}}\Big)^{d}
\end{align*}
Thus we have
\begin{align*}
     q_{X_k} \leq {6}k^{-\frac{1}{d}}.
\end{align*}

% Here we use the fact that Vol$(\Omega \bigcap B_{\frac{q_{X_k}}{2}}(\bx_{k_i}))\geq \frac{\pi}{3\pi}\text{Vol}(B_{\frac{q_{X_k}}{2}}(\bx_{k_i}))$.
\iffalse
where $N(d)$ is the
and the covering number bound in Chapter 27 of \cite{shalev2014understanding} shows that $N(d) = 3^d$ for the unit ball case. \fi
This gives us the second bound.
\end{proof}
\begin{remark}
When $\Omega$ satisfies the interior cone condition \cite{muller_komplexitat_2009},
similar bounds $ h_{X_{k}} \geq  C_{\Omega}k^{-1/d} ~ \text{and}  ~q_{X_{k}}\leq C'_{\Omega} k^{-1/d}$ can be derived for more complex bounded domains where $C_{\Omega}~ \text{and}  ~ C'_{\Omega}$ are two constants depending on the domain $\Omega$.
\end{remark}

{The above bounds show that the minimal fill distance $h_{X_k}$ cannot be smaller than $k^{-1/d}$ while the maximal separation distance $q_{X_k}$ cannot be greater than ${6}k^{-1/d}$ and $\frac{1}{6}q_{X_k}\leq h_{X_{k}}$ when the domain is a unit ball in $\mathbb{R}^d$.} In the following theorem, we show that if a sampling scheme can select a subset $X_k$ with $h_{X_k}\leq Cq_{X_k}$, then $q_{X_k}$ has the same order as the maximal separation distance that can be achieved by a subset with $k$ points.

\begin{theorem}
\label{thm:FPS is near optimal}
Assume the data points are on a bounded domain $\Omega$ that satisfies the interior cone condition, then if $h_{X_k}\leq Cq_{X_k}$
\begin{align}
    C_{\Omega}  k^{-1/d} \leq h_{X_k} \leq C \times C_{\Omega}^{\prime} k^{-1/d}   ,  & \quad \frac{C_{\Omega}}{C} k^{-1/d}\leq q_{X_k}\leq C_{\Omega}^{\prime} k^{-1/d}.
\end{align}
\end{theorem}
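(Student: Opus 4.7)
The plan is to observe that this theorem is essentially a direct corollary of Theorem \ref{thm:fill-separation worst case} combined with the hypothesis $h_{X_k} \leq C q_{X_k}$. Theorem \ref{thm:fill-separation worst case} already supplies two of the four needed inequalities outright, namely the lower bound $h_{X_k} \geq C_\Omega k^{-1/d}$ and the upper bound $q_{X_k} \leq C_\Omega^\prime k^{-1/d}$, since these hold for any subset of $k$ points regardless of how it is chosen. The remark following Theorem \ref{thm:fill-separation worst case} extends these bounds from the unit ball to any bounded domain satisfying the interior cone condition, which is precisely the setting of the present theorem.

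For the two remaining inequalities, I would simply chain the hypothesis with the bounds from Theorem \ref{thm:fill-separation worst case}. The upper bound on $h_{X_k}$ follows by combining the assumption $h_{X_k} \leq C q_{X_k}$ with the universal upper bound on $q_{X_k}$:
\begin{equation*}
h_{X_k} \leq C q_{X_k} \leq C \cdot C_\Omega^\prime k^{-1/d}.
\end{equation*}
Symmetrically, the lower bound on $q_{X_k}$ follows by rearranging the hypothesis as $q_{X_k} \geq h_{X_k}/C$ and inserting the universal lower bound on $h_{X_k}$:
\begin{equation*}
q_{X_k} \geq \frac{h_{X_k}}{C} \geq \frac{C_\Omega}{C} k^{-1/d}.
\end{equation*}

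There is no real obstacle here; the only substantive content has already been established in Theorem \ref{thm:fill-separation worst case}, and this theorem merely records the consequence that, whenever a sampling strategy produces points satisfying the quasi-uniformity condition $h_{X_k} \leq C q_{X_k}$, both $h_{X_k}$ and $q_{X_k}$ must be of the optimal order $\Theta(k^{-1/d})$, up to constants depending only on $\Omega$ and $C$. This then sets the stage for Section \ref{subsec:fps}, where FPS is shown to satisfy $h_{X_k} \leq q_{X_k}$ (i.e., the case $C=1$), making FPS a near-optimal sampler in both measures simultaneously.
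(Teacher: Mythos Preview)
Your proposal is correct and follows essentially the same approach as the paper: both arguments simply chain the universal bounds from Theorem~\ref{thm:fill-separation worst case} with the hypothesis $h_{X_k}\le C q_{X_k}$ to obtain the four inequalities. The paper's written proof is even terser (it only displays the chain $C_\Omega k^{-1/d}\le h_{X_k}\le C q_{X_k}\le C\,C_\Omega' k^{-1/d}$ and leaves the $q_{X_k}$ lower bound implicit), so your version is, if anything, slightly more complete.
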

\begin{proof}
If $ h_{X} \leq C q_{X}$, then we have
\begin{align*}
    C_{\Omega}k^{-1/d} \leq  h_{X_{k}} \leq C q_{X_{k}}\leq C\times C_{\Omega}^{\prime} k^{-1/d}.
\end{align*}
\end{proof} 
Theorem \ref{thm:FPS is near optimal} shows that $h_{X_k}$ is at most $C\times\frac{C^{'}_{\Omega}}{C_{\Omega}}$ times larger than its theoretical lower bound and $q_{X_k}$ is at least $\frac{1}{C}\times\frac{C_{\Omega}}{C_{\Omega^{'}}}$ times as large as its theoretical upper bound in this case.

\subsection{Optimal properties of FPS}
\label{subsec:fps}
Although FPS is a greedy algorithm designed to select a set of data points
with maximal dispersion at each iteration, FPS can generate $X_k$
with $h_{X_k}$ at most $2$ times the minimal fill distance
\cite{gonzalez_clustering_1985} and $q_{X_k}$ at least half the
largest separation distance over all subsets with $k$ points
\cite{white1991maximal}. 
\shifanrewriting{In the next theorem, we first confirm that FPS can generate $X_k$ satisfying $h_{X_k} \leq q_{X_k}$ and then demonstrate its two near-optimality properties in a cohesive manner. While these properties have been independently established in \cite{gonzalez_clustering_1985, white1991maximal}, our analysis amalgamates and revalidates these results within a unified framework.}

% The above results are stated for the FPS on continuous domain. But the results also hold for the discrete case. {Please focus on the discrete case $\Omega=X$ in all the discussion in this section. Separate the preconditioning bound into two. }

\begin{theorem}
\label{thm: FPS 2 times optimal}
Suppose the minimal fill distance of a subset with $k$ points is achieved with $X^{\ast}_k$ and the maximal separation distance of a subset with $k$ points is achieved with $X_{k\ast}$. Then the set $X_k$ sampled by FPS satisfies 
\begin{align}
   h_{X_k}\leq q_{X_k}\quad \text{and} \quad q_{X_k}\geq\frac{1}{2}q_{X_{k*}} \quad \text{and} \quad h_{X_k} \leq 2h_{X_k^{\ast}}.
\end{align}
\end{theorem}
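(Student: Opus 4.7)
The plan is to reduce all three claims to the single sharp identity
$q_{X_k} = h_{X_{k-1}}$,
which falls directly out of the FPS greedy rule, and then to derive both near-optimality statements from one pigeonhole template applied with two different target sets.

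First I would establish the identity $q_{X_k} = h_{X_{k-1}}$. By construction of FPS, $\operatorname{dist}(\bx_{k_j}, X_{j-1}) = h_{X_{j-1}}$ for every $j \geq 2$, so for any pair $i < j \leq k$ the pairwise distance $\operatorname{dist}(\bx_{k_i}, \bx_{k_j})$ is bounded below by $h_{X_{j-1}} \geq h_{X_{k-1}}$, yielding $q_{X_k} \geq h_{X_{k-1}}$. The reverse inequality is obtained by inspecting the pair consisting of $\bx_{k_k}$ together with its nearest neighbour in $X_{k-1}$, whose separation equals $h_{X_{k-1}}$ exactly. The first inequality $h_{X_k} \leq q_{X_k}$ of the theorem is then immediate, since $(h_{X_j})_j$ is non-increasing in $j$ and the identity gives $h_{X_k} \leq h_{X_{k-1}} = q_{X_k}$.

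For the separation-distance near-optimality $q_{X_k} \geq \tfrac{1}{2} q_{X_{k\ast}}$, I would apply pigeonhole to the map that sends each of the $k$ points of $X_{k\ast}$ to its nearest neighbour in $X_{k-1}$ (of size $k-1$). Two distinct points $p, q \in X_{k\ast}$ must share a common image, each at distance at most $h_{X_{k-1}}$ from it, so by the triangle inequality $q_{X_{k\ast}} \leq \operatorname{dist}(p,q) \leq 2 h_{X_{k-1}} = 2 q_{X_k}$, using the identity from the first step. For the fill-distance near-optimality $h_{X_k} \leq 2 h_{X_k^\ast}$, I would extend FPS by one further step to obtain $X_{k+1}$, invoke the identity for $X_{k+1}$ to get $q_{X_{k+1}} = h_{X_k}$, and then run exactly the same pigeonhole with $|X_{k+1}| = k+1$ mapped into $X_k^\ast$ (of size $k$), which produces two points of $X_{k+1}$ within $2 h_{X_k^\ast}$ of each other, so $h_{X_k} = q_{X_{k+1}} \leq 2 h_{X_k^\ast}$.

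The main conceptual step is recognising that the sharp \emph{identity} $q_{X_k} = h_{X_{k-1}}$, rather than merely the inequality $h_{X_k} \leq q_{X_k}$ that appears in the theorem statement, is what allows the two near-optimality bounds to fall out of the same pigeonhole template with only the target set changed; everything else is routine triangle inequality and counting. A minor care point will be handling the case in which the two points produced by pigeonhole happen to lie inside the target set itself, but there the distance bound $\leq h_{X_{k-1}}$ (resp.\ $\leq h_{X_k^\ast}$) still holds trivially, so the argument goes through unchanged.
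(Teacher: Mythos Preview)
Your proposal is correct and follows essentially the same approach as the paper: both proofs exploit the FPS greedy relation linking $q_{X_k}$ to a prior fill distance, and then run a pigeonhole-plus-triangle-inequality argument against $X_{k\ast}$ and $X_k^\ast$ respectively. Your version is slightly cleaner in that you isolate the sharp identity $q_{X_k}=h_{X_{k-1}}$ up front and reuse it symmetrically for both near-optimality bounds, whereas the paper works with $q_{X_k}=h_{X_{m-1}}$ for an intermediate index $m$ and re-derives the inequality $q_{X_{k+1}}\geq h_{X_k}$ separately, but the underlying ideas are identical.
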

\begin{proof}
Without loss of generality, we assume the subset $X_k$ sampled by FPS contains the points $\bx_1,\bx_2,\ldots,\bx_k$. Suppose $q_{X_{k}} = \operatorname{dist}(\bx_j,\bx_m)$ with $j< m<(k+1)$, and point $\bx_m$ is selected at iteration $m$ by FPS, then 
\begin{equation}
\label{eq:qx is hx}
  h_{X_{m-1}} = \max_{\bx\in X \backslash X_{m-1}} \operatorname{dist}(\bx, X_{m-1})=\operatorname{dist}(\bx_j,\bx_m)=q_{X_k}.  
\end{equation}
Since $h_{X_k}$ is a non-increasing function of $k$, we have $h_{X_{k}}\leq h_{X_{m-1}}=q_{X_k}$. 

We now prove $q_{X_{k}} \geq \frac{1}{2} q_{X_k*}$. According to the definition, there exists a subset with $k$ points $X_{k*} = \{\bx^{1}_{*},\dots, \bx^{k}_{*}\}$ such that 
$$
q_{X_{k*}} = \max_{Y\subset X, |Y|=k } \; \min_{\bx_i,\bx_j\in Y}\operatorname{dist}(\bx_i, \bx_j).
$$
According to \eqref{eq:qx is hx}, we know all the points in $X$ must lie in one of the $m-1$ disks defined by 
\begin{equation}
    C(\bx_i, q_{X_k}) = \{\bx| \norm{\bx-\bx_i} \leq q_{X_k}\},\quad i \in [m-1].
\end{equation}
Since $m-1<k$, at least two points $\bx^{i}_{*}, \bx^{j}_{*}\in X_{k*}$ must belong to the same disk centered at some $\bx_l$. Therefore, $2q_{X_k} \geq \operatorname{dist}(\bx^{i}_{*}, \bx_l) + \operatorname{dist}(\bx^{j}_{*}, \bx_l) \geq \operatorname{dist}(\bx^{i}_{*}, \bx^{j}_{*}) \geq q_{X_k*} $ via the triangle inequality.

Next, we prove $h_{X_k} \leq 2h_{X_k^{\ast}}$. At the $k$th iteration of FPS, the set $X$ can be split into $k$ clusters $\{C_i\}_{i=1}^{k}$ such that the point $\mathbf{x}$ in $X$ will be classified into cluster $C_i$ if $\operatorname{dist}(\bx_i,\mathbf{x})\leq \operatorname{dist}(\bx_j,\mathbf{x}),~\forall j\neq i$. At the $(k+1)$th iteration of FPS, one more point $\bx_{k+1}$ will be selected. Then we can show that $$\operatorname{dist}(\bx_i,\bx_j)\geq h_{X_k} \quad \text{for} \quad i,j\in \{1,2,\ldots,k+1\},$$
and in particular
$$q_{X_{k+1}}\geq h_{X_k}.$$ 
Assume $\bx_{k+1}\in C_i$. From the definition of $h_{X_k}$, we know that $\operatorname{dist}(\bx_{k+1},\bx_i)= h_{X_k}$ and $\operatorname{dist}(\bx_{k+1},\bx_j)\geq \operatorname{dist}(\bx_{k+1},\bx_i)$ for $j\neq i$. Moreover, we have $\operatorname{dist}(\bx_i,\bx_j)\geq q_{X_k}$ for $j\neq k+1$. Since $q_{X_k}\geq h_{X_k}$, we know  $q_{X_{k+1}}=\operatorname{dist}(\bx_i,\bx_j)\geq h_{X_k}$. 

Finally, assume $\bx^{\ast}_1,\bx^{\ast}_2,\ldots,\bx^{\ast}_k$ are the optimal subset of $X$ that achieves the minimal fill distance with cardinality $k$. Now the set $X$ can be split into $k$ clusters $\{{C}^{\ast}_i\}_{i=1}^{k}$ such that the point $\mathbf{x}$ in $X$ will be classified into $C^{\ast}_i$ if $\operatorname{dist}(\bx_i^{\ast},\mathbf{x})\leq \operatorname{dist}(\bx_j^{\ast},\mathbf{x}),~\forall j\neq i$. Assume the points selected by FPS in the first $k+1$ iterations are $\bx_1,\bx_2,\ldots,\bx_{k+1}$. We know that at least two points from $\bx_1,\bx_2,\ldots,\bx_{k+1}$ belong to the same cluster. Denote these two points as $\bx_p$ and $\bx_q$ and the corresponding cluster is $C^{\ast}_j$. Then we have 
\[
h_{X_{k}}\leq q_{X_{k+1}}\leq \operatorname{dist}(\bx_p,\bx_q) \leq \operatorname{dist}(\bx_p,\bx^{\ast}_i) + \operatorname{dist}(\bx_q,\bx^{\ast}_i) \leq 2h_{X^{\ast}_k},
\]
which indicates that $h_{X_{k}} \leq 2h_{X^{\ast}_k}$.
\end{proof}

\subsection{Nystr\"om approximation error analysis based on fill distance}
\label{subsec:apprerr}
{ Then we justify why FPS is a good landmark points selection method from the perspective of Nystr\"om approximation error.
Define the Nystr\"om approximation error as 
$$\norm{\bK-\Knys } = \norm{\bK_{22} - \bK_{21}\bK_{11}^{-1}\bK_{12}}.$$ In this section, we will show that the Nystr\"om approximation error is also related to the fill distance $h_{X_k}$.  In particular, for Gaussian kernels defined in \eqref{eq:gaussianf} and inverse multiquadric kernels \begin{equation}
    \mathcal{K}(\bx, \by) = (c^2 + \norm{\bx-\by}^2)^{-\frac{p}{2}},~p>0,~c\in \mathbb{R},
    \label{eq:inverseMultiquadric}
\end{equation} we can derive a Nystr\"om approximation error estimate in terms of the fill distance, as presented in the following theorem.}
{
\begin{theorem}
\label{thm:nystrom-bound}
The Nystr\"om approximation $\Knys = \bK_{{X},X_k} \bK_{X_k,X_k}^{-1}\bK_{X_k,X}$ to $\bK$ using the landmark points $X_{k} = \{\bx_{k_i}\}_{i=1}^{k}$ has the following error estimate
\label{corollary:approximation fill distance 2-norm bound}
\begin{equation}
\label{ineq: discrete fill distance bound}
   \Vert \bK- \Knys\Vert < \sqrt{n\Vert \bK \Vert}  C'\exp(-C{''}/h_{X_k}),
\end{equation}
where $C'$ and $C{''}$ are constants independent of $X_k$ but dependent on kernels and the domain.
\end{theorem}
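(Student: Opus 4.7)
The plan is to reduce the matrix error $\|\bK-\Knys\|_2$ to a pointwise interpolation quantity (the power function) and then invoke the classical exponential decay estimates known for Gaussian and inverse multiquadric kernels. The key observation is that the $i$-th diagonal entry of $\bK-\Knys$ equals the squared power function value at $\bx_i$:
\begin{equation*}
(\bK-\Knys)_{ii} = \mathcal{K}(\bx_i,\bx_i) - \mathbf{k}(\bx_i)^{\top} \bK_{11}^{-1} \mathbf{k}(\bx_i) = P_{X_k}^2(\bx_i),
\end{equation*}
where $\mathbf{k}(\bx) = [\mathcal{K}(\bx_{k_1},\bx),\ldots,\mathcal{K}(\bx_{k_k},\bx)]^{\top}$. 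This identifies the discrete Nystr\"om error with the standard kernel interpolation residual evaluated at the data points.

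First, I would exploit the positive semidefiniteness of $\bK-\Knys$. Because $\Knys \succeq \mathbf{0}$, we have $\mathbf{0} \preceq \bK-\Knys \preceq \bK$, so $\|\bK-\Knys\|_2 \leq \|\bK\|_2$. For any PSD matrix $E$ with ordered eigenvalues $\lambda_1\geq\lambda_2\geq\cdots\geq 0$, the elementary inequality $\lambda_1^2 \leq \lambda_1 \cdot \sum_i \lambda_i$ gives $\|E\|_2^2 \leq \|E\|_2 \cdot \operatorname{tr}(E)$. Combining these two facts yields
\begin{equation*}
\|\bK-\Knys\|_2 \;\leq\; \sqrt{\|\bK\|_2 \cdot \operatorname{tr}(\bK-\Knys)}\;.
\end{equation*}
This is the structural inequality that produces the $\sqrt{n\|\bK\|}$ factor in the theorem's conclusion.

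Next, I would bound the trace using the diagonal identification from Step 1:
\begin{equation*}
\operatorname{tr}(\bK-\Knys) \;=\; \sum_{i=1}^{n} P_{X_k}^2(\bx_i) \;\leq\; n\, \sup_{\bx\in\Omega} P_{X_k}^2(\bx).
\end{equation*}
The final and most delicate ingredient is a pointwise exponential estimate on the power function. For the Gaussian kernel \eqref{eq:gaussianf} and for the inverse multiquadric kernel \eqref{eq:inverseMultiquadric}, both of which possess analytic native spaces, the classical scattered-data approximation results of Madych--Nelson and Wendland yield
\begin{equation*}
P_{X_k}(\bx) \;\leq\; C' \exp(-C''/h_{X_k}),
\end{equation*}
with constants $C',C''$ depending only on the kernel parameters and the domain $\Omega$ (via an interior cone condition), but not on the specific subset $X_k$. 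Chaining the inequalities above produces
\begin{equation*}
\|\bK-\Knys\|_2 \;\leq\; \sqrt{\|\bK\|_2 \cdot n\, (C')^2 \exp(-2C''/h_{X_k})} \;=\; C'\sqrt{n\|\bK\|_2}\,\exp(-C''/h_{X_k}),
\end{equation*}
which is the desired estimate (after relabeling the constants).

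The main obstacle is the pointwise power function bound in the last step: it is the only non-elementary ingredient and it is where the restriction to Gaussian and inverse multiquadric kernels enters. These estimates are typically stated for continuous domains satisfying an interior cone condition, so some care is needed to ensure the constants can be taken independent of $X_k$ and to confirm the bound transfers cleanly when $\Omega$ is the finite data set used in \eqref{eq:filldistance}. The remaining steps (the PSD norm--trace inequality and summing diagonal entries) are short linear-algebraic manipulations and should present no difficulty.
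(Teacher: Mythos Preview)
Your argument is correct and reaches the same bound, but the route is genuinely different from the paper's.

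The paper works operator-theoretically: it writes $\bK-\Knys$ (up to scaling) as the composition $(\mathcal{R}_{\mu_X}-\mathcal{S}_{X_k})\circ\mathcal{K}_{\mu_X}$ of the interpolation-residual operator with the integral operator, factors the $L^2\!\to\!L^2$ norm through the RKHS $\mathcal{H}_X$, invokes Belkin's bound $\|\mathcal{R}_{\mu_X}-\mathcal{S}_{X_k}\|_{\mathcal{H}\to L^2_\mu}<C'\exp(-C''/h_{X_k})$ for the first factor, and computes $\|\mathcal{K}_{\mu_X}\|_{L^2\to\mathcal{H}}=\sqrt{\|\bK\|/n}$ via a Mercer-type argument for the second. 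Your proof is considerably more direct: the PSD sandwich $0\preceq\bK-\Knys\preceq\bK$ plus the trivial $\lambda_1^2\le\lambda_1\operatorname{tr}$ immediately gives $\|\bK-\Knys\|^2\le\|\bK\|\,\operatorname{tr}(\bK-\Knys)$, and identifying the diagonal with the squared power function $P_{X_k}^2(\bx_i)$ reduces everything to the classical Madych--Nelson/Wendland pointwise estimate. Both proofs ultimately rest on the same exponential RKHS interpolation bound (Belkin's theorem is itself proved from the power-function estimate), but yours avoids the integral-operator machinery and the Mercer eigenfunction computation entirely. What the paper's approach buys is a cleaner conceptual picture of the Nystr\"om error as an operator factorization; what yours buys is a shorter, purely linear-algebraic derivation in which the origin of the $\sqrt{n\|\bK\|}$ factor is completely transparent. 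The caveat you flag about transferring the continuous-domain power-function bound to the discrete fill distance of \eqref{eq:filldistance} is real but applies equally to the paper's use of Belkin's theorem.
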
}
The detailed proof of Theorem \ref{thm:nystrom-bound} is in Appendix \ref{sec:fast_fps}. {This theorem is a discrete version of the Theorem A in \cite{belkin_approximation_2018}, which implies that kernel operators corresponding to smooth kernels are effective low rank.  Our proof adapts the original results on kernel functions, as presented in \cite{belkin_approximation_2018}, to discrete matrix settings. This extension shows that the low-rank approximation mentioned in \cite{belkin_approximation_2018} can indeed be interpreted as a Nystr\"om approximation applicable to matrices.} For this Nystr\"om approximation. Theorem \ref{thm:nystrom-bound} implies landmark points $X_k$ with a smaller fill distance can yield a more accurate Nystr\"om approximation. {We illustrate this numerically with an experiment. In Figure \ref{fig:fill-distance-and-error temp},  we plot the fill distance curve and the Nystr\"om approximation error curve corresponding to a Gaussian kernel with $l=10$ when $1000$ points are uniformly sampled from a cube with edge length $10$. We test random sampling and FPS for selecting the landmark points and observe that FPS leads to a smaller fill distance than random sampling. We also observe that FPS Nystr\"om can achieve lower approximation errors than the randomly sampled one when the same $k$ is used.}
Thus we will use FPS to select landmark points in the construction of Nystr\"om-type preconditioners if the estimated rank is small. Meanwhile, the rank estimation algorithm discussed in Section \ref{sec:adaptive} also relies on FPS.

\begin{figure}[t]
     %\includegraphics[width=.47\textwidth]{}
     %\includegraphics[width=.47\textwidth]{}
    %\caption{Comparison of fill distance and the Nystr\"om  approximation error for $1000$ points uniformly sampled from a cube with edge length $10$,  when the Gaussian kernel function with length-scale $l = 10$ is used. FPS and random sampling are used to sample $k$ points from $X$ to form $X_k$. Nystr\"om error is computed only for the ranks which are multiples of $10$. TODO: those figures are for 2D datasets.}
     \includegraphics[width=.47\textwidth]{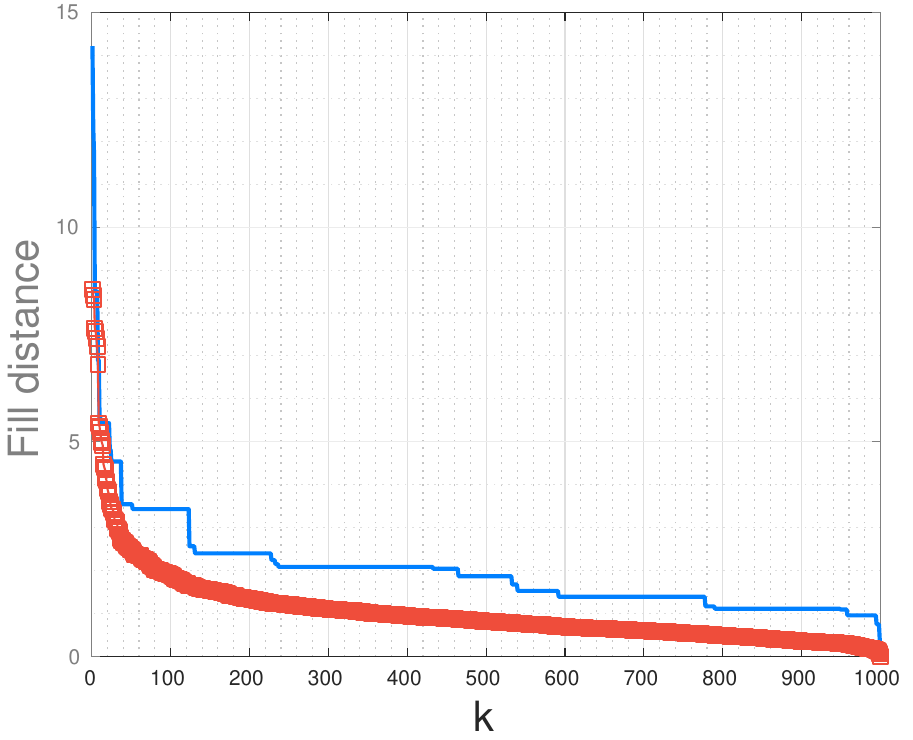}
     \includegraphics[width=.47\textwidth]{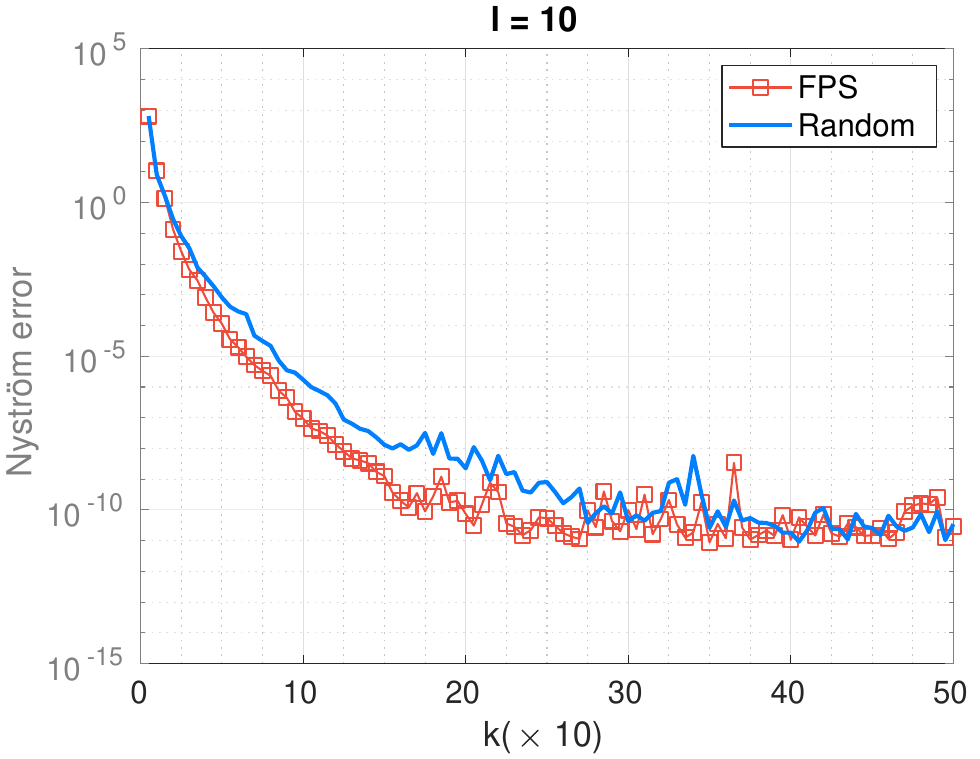}
    \caption{Comparison of fill distance and the Nystr\"om  approximation error for $1000$ points uniformly sampled from a cube with edge length $10$,  when the Gaussian kernel function with length-scale $l = 10$ is used. FPS and random sampling are used to sample $k$ points from $X$ to form $X_k$. Nystr\"om error is computed only for the ranks which are multiples of $10$.}
    \label{fig:fill-distance-and-error temp}
\end{figure}

\begin{remark}
\label{rmk: lengthscale-plays-key-roles}
   The error estimate in \eqref{corollary:approximation fill distance 2-norm bound} does not involve the length-scale $l$ explicitly. However, this error estimate can still help understand how the length-scale in Gaussian kernels affects the Nystr\"om approximation error when the same landmark points $X_k$ are used. Assume $h_{X_k}$ is the fill distance of $X_k$ associated with the unit length-scale. When we change the length-scale to $l$, the kernel matrix associated with length-scale $l$ can be regarded as a kernel matrix associated with the unit length-scale and the scaled data points $\Tilde{\bx} = \bx/l$. This is because $\norm{\Tilde{\bx} - \Tilde{\by}} = \norm{\frac{\bx}{l} - \frac{\by}{l}} = \frac{1}{l}\operatorname{dist}(\bx,\by)$. In this case, the fill distance on the rescaled data points becomes $\frac{h_{X_k}}{l}$. As a result, as $l$ increases, the exponential factor in the estimate decays faster. This is consistent with the fact that the {Gaussian} kernel matrix $\bK$ is numerically low-rank when $l$ is large. 
\end{remark}

\subsection{FPS and Screening Effect}
\label{subsec:fps and screening}
In this section, we delve deeper into the connection between FPS and the screening effect, providing further justification for the integration of FPS within the \texttt{AFN} framework. Empirical evidence in \cite{guinness2018permutation} supports FPS's superiority over alternative methods in numerous scenarios. The screening effect's effectiveness, closely tied to the uniformity of sampled points as measured by the ratio $\delta_{X} = \frac{q_{X}}{h_{X}}$, requires $\delta_{X}$ to be strictly bounded, both lower and upper, a condition met by FPS as proven in Section \ref{subsec:fps}. Further insights from Section 3.2 in \cite{schafer_compression_2021} and \cite{schafer_sparse_2021} indicate that FPS's selection of evenly distributed points not only enhances the sparsity in the Cholesky decomposition of the kernel matrix but also, the precision matrix. Thus, FPS's application in sampling could increase the sparsity in $(\Schur)^{-1}$, improving the \texttt{AFN} preconditioner's efficiency and accuracy.
We now demonstrate the screening effect (mentioned in Section \ref{subsec:screening effect and fsai}) numerically with an example in Figure \ref{fig:sparsitySMWK11} when FPS is applied to select landmark points. Figure \ref{fig:sparsitySMWK11} shows histograms of the magnitude of the entries in three matrices $\bK_{22}+\mu \bI$, $\Schur$ and $(\Schur)^{-1}$ for $l=5$, with the matrices scaled so that their maximum entries are equal to one. The $1000$ data points $X$ are generated uniformly over a cube with edge length $10$ and $100$ landmark points $X_{100}$ are selected by FPS. The figure shows that $\Schur$ and its inverse have many more entries with smaller magnitude than $\bK_{22} + \mu\bI$. This example further justifies that $(\Schur)^{-1}$ has more ``sparsity" than $\Schur$, which supports the fact that FPS promotes sparsity in both kernel matrix and precision matrix and further supports the use of \texttt{FSAI} to $(\Schur)^{-1}$.
\begin{figure}[H]
   %  \begin{minipage}{0.45\textwidth}
   %    \centering
   %  \includegraphics[width=.8\linewidth]{}
   % \end{minipage}
   \begin{minipage}{0.32\textwidth}
      \centering
      \includegraphics[width=.99\linewidth]{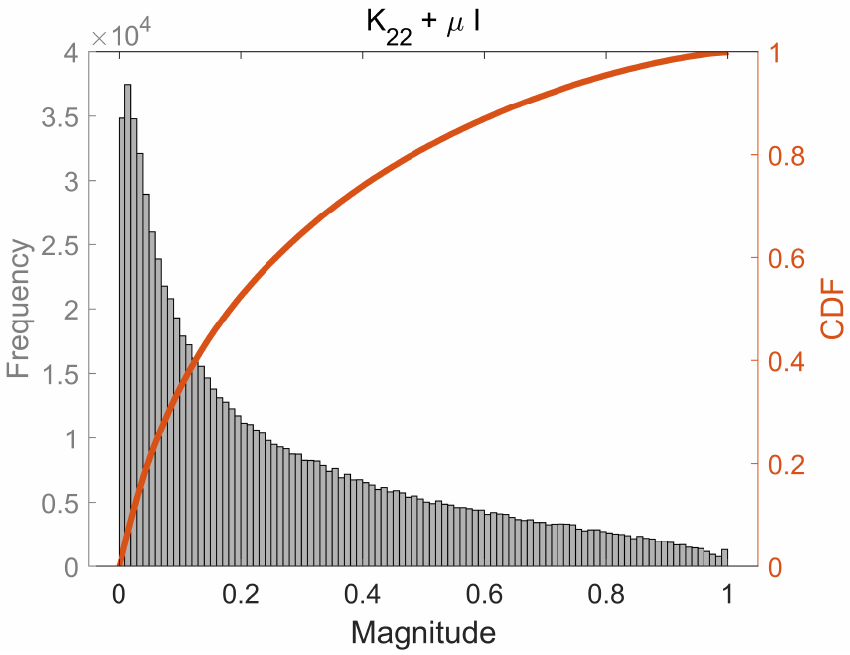}
   \end{minipage}
     \begin{minipage}{0.32\textwidth}
      \centering
      \includegraphics[width=.99\linewidth]{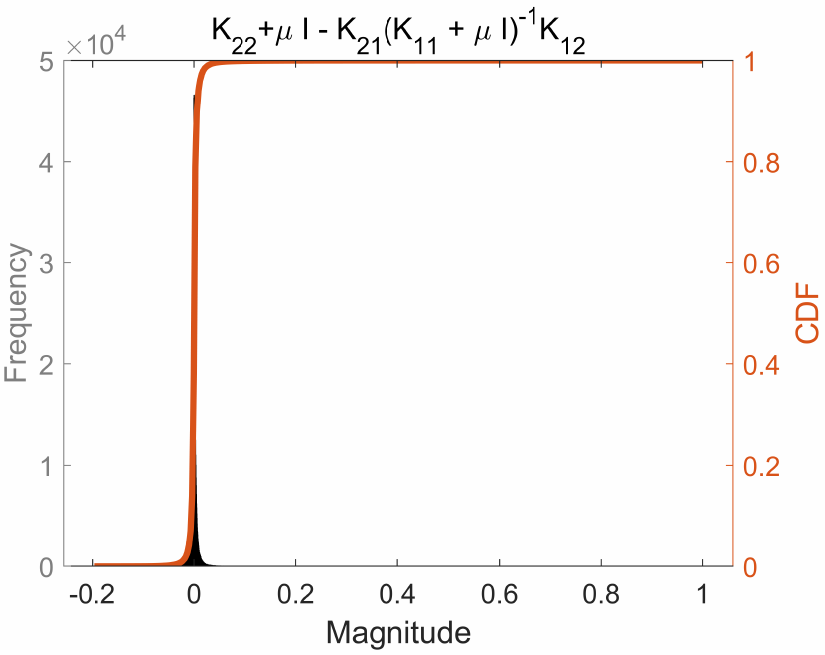}
   \end{minipage}
     \begin{minipage}{0.32\textwidth}
      \centering
      \includegraphics[width=.99\linewidth]{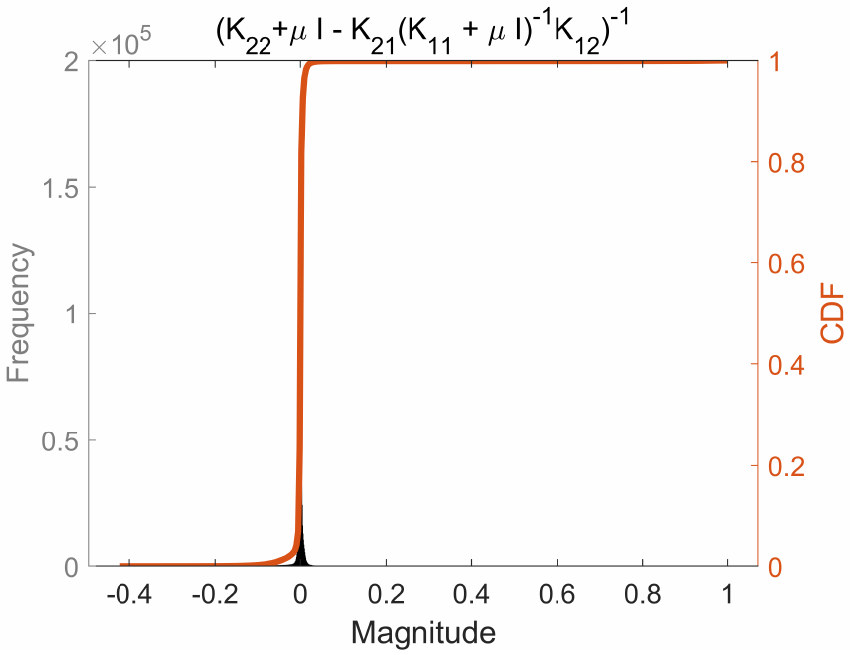}
   \end{minipage}
    \caption{Histograms of the magnitude of the entries in $\bK_{22}+\mu\bI$, $\Schur$, and  $(\Schur)^{-1}$ associated with a Gaussian kernel matrix defined using $1000$ points sampled uniformly from a cube with edge length $10$, regularization parameter $\mu=0.0001$, and length-scale $l=5$. The maximum entries in these three matrices are all scaled to $1$. $\bK$ has $243$ eigenvalues greater than $1.1\times\mu$.}
    \label{fig:sparsitySMWK11}
\end{figure}

\subsection{Implementation of FPS}
\label{subsec:implementation of fps}
A straightforward implementation of FPS for selecting \(k\) samples from \(n\) points in \(\mathbb{R}^d\) has a computational complexity of \(O(dk^2n)\). However, this complexity can be reduced to \(O(dkn)\) by maintaining a vector to track the distances between the unsampled points and the points already sampled. This optimization is elaborated in Algorithm \ref{alg:fps}.
\begin{algorithm}[H]
\caption{Farthest Point Sampling (\texttt{FPS})}
\begin{algorithmic}[1]
\STATE{\textbf{Input}: Dataset $X$ of size $n$, number of samples $k$}
\STATE{\textbf{Output}: Landmark point set $X_k$ of size $k$}
\STATE{Assign index to data points in $X$ as $\bx_1, \ldots, \bx_{n}$}
\STATE Calculate $\bar{\bx} = \frac{1}{n} \sum_{i=1}^{n} \bx_i$.
\STATE{Set $l=\underset{1\leq j\leq n}{arg\,min}\operatorname{dist}(\mathbf{x}_j,\bar{\mathbf{x}})$}
\STATE{Initialize the set $X_k=\{\mathbf{x}_l\}$}
\STATE{Initialize the distance vector $\mathbf{d}$ of length $n$, setting all entries to $\inf$}
\FOR{$i = 1$ to $k-1$}
    \STATE Update $\mathbf{d}$ with entries $\mathbf{\mathbf{d}}(j) = \min\{\mathbf{d}(j), \text{dist}(\bx_j, \bx_l)\}$
    \STATE Choose $l = \underset{1\leq j\leq n}{arg\,max}\mathbf{d}(j)$
    \STATE{Add $\mathbf{x}_l$ to $X_k$}
\ENDFOR
\STATE{\textbf{Return}: $X_k$}
\end{algorithmic}
\label{alg:fps}
\end{algorithm}

%%%%%%%%%%%%%%%%%%%%%%%%%%%%%%%%%%%%%%%%%%%%%%%%%%%%%%%%%%%%%%%%%%%%%%%%%%%%%%%%
\section{Numerical experiments}
\label{sec:experiment}
The \texttt{AFN} preconditioner and the preconditioning strategy (Algorithm
\ref{alg:strategy}) are tested for the iterative solution of regularized
kernel matrix systems \eqref{eq:Problem} over a wide range of length-scale
parameters $l$ in the following two kernel functions
\begin{itemize}
    \item Gaussian kernel: $\mathcal{K}(\bx,\by) = \exp\left(-\frac{1}{l^2}\Vert\bx-\by\Vert_2^2\right)$
    \item Mat\'ern-3/2 kernel: $\mathcal{K}(\bx,\by) = \left(1 + \frac{\sqrt{3}}{l}\vert| \bx - \by \vert|_2\right) \exp\left(-\frac{\sqrt{3}}{l} \vert| \bx - \by \vert|_2\right)$.
\end{itemize}

%We are interested in the performance of \texttt{AFN} and these
%preconditioners across a range of kernel function parameter values.

We also benchmark the solution of these systems using unpreconditioned \texttt{CG},
and preconditioned \texttt{CG}, with the \texttt{FSAI} preconditioner and with the 
randomized Nystr\"om (\texttt{RAN}) preconditioner
\cite{frangella_randomized_2021} with randomly selected $k$ landmark points.
%which we denote as \texttt{RAN}($k$).

\texttt{RAN} approximates
the kernel matrix with a rank-$k$ Nystr\"om approximation based on 
randomly sampling the data points.
% To select the rank of the approximation,
% we implemented the adaptive rank selection method \cite[Strategy
% 2]{frangella_randomized_2021} which we denote by RAN(Adapt). This
% algorithm dynamically selects the size $k$ of $\bK_{11}$ based on the
% ratio of the $k$th smallest eigenvalue of the Nystr\"om approximation
% to $\mu$ and keeps doubling the value of $k$ until the ratio falls below
% some tolerance or $k$ reaches the maximum value.
Assuming the $k$-th largest eigenvalue of $\Knys$ is $\lambda_k$, the inverse
of the \texttt{RAN} preconditioner takes the form \cite{frangella_randomized_2021}:
$(\lambda_k+\mu)\bU(\mathbf{\Lambda}+\mu\bI)^{-1}\bU^\top+(\bI-\bU\bU^\top)$
where  $\bU\mathbf{\Lambda}\bU^\top$ is the eigendecomposition of
$\Knys$.
% We set the initial rank to be $125$ and the maximal rank to be
% 2000 in RAN(Adapt). We also test the performance of RAN with a fixed
% rank 4000, which we denote by RAN(4000).
{{In our experiments, we use $400$ nearest neighbors as the sparsity pattern for \texttt{FSAI}, fix the Nystr\"om rank to be $3000$ for \texttt{RAN}, and use $100$ nearest neighbors as the sparsity pattern for the \texttt{FSAI} used in \texttt{AFN}. }}

The stopping tolerance for the relative residual norm is set to be $10^{-4}$.
We randomly generated right-hand side vectors in Equation \eqref{eq:Problem}
with entries from the uniform distribution $[-0.5, 0.5]$.
For all tests we perform 3 runs and report the average results.

{\texttt{AFN}, \texttt{RAN} and \texttt{FSAI} have been implemented in C.} {The C implementation of the  \texttt{AFN} preconditioner can be found in the AFN\_Precond branch of the H2Pack GitHub website \footnote{\href{https://github.com/scalable-matrix/H2Pack/tree/AFN\_precond}{https://github.com/scalable-matrix/H2Pack/}}.
    The test routines for \texttt{AFN} and \texttt{RAN} can be found from this web page \footnote{\href{https://github.com/scalable-matrix/H2Pack/tree/AFN_precond/examples/AFN_precond}{https://github.com/scalable-matrix/H2Pack/tree/AFN\_precond/examples/AFN\_precond}} and the test routines for \texttt{FSAI} can be found from this web page \footnote{\href{https://github.com/scalable-matrix/H2Pack/tree/AFN_precond/examples/SPDHSS-H2}{https://github.com/scalable-matrix/H2Pack/tree/AFN\_precond/examples/SPDHSS-H2}}.} Experiments are run on an Ubuntu 20.04.4 LTS machine equipped with 755
GB of system memory and a 24-core 3.0 GHz Intel Xeon Gold 6248R CPU.
We build our code with the GCC 9.4.0 compiler and take advantage of
shared memory parallelism using OpenMP. We use the parallel \texttt{BLAS}
and \texttt{LAPACK} implementation in the \texttt{OpenBLAS} library for
basic matrix operations. \texttt{H2Pack} \cite{DDSMASH,10.1145/3412850}
is used to provide linear complexity matrix-vector multiplications
associated with large-scale $\bK$ for 3D datasets with the relative error threshold $10^{-8}$. {We utilized a brute force parallel FPS algorithm on the global dataset. OpenMP was used to apply an $O(n)$ distance update in parallel at each step. The computational cost is tractable due to a maximum of $2000$ distance updates required. 
%For the subsampled dataset described in Algorithm \ref{alg:nys-estimation}, we employed a sequential FPS algorithm outlined in \cite{schafer_compression_2021} as the entire FPS ordering was required. This algorithm bypasses the need to compute $O(n^2)$ pairwise distances by constructing and maintaining a heap.
} The number of OpenMP
threads is set to 24 in all the experiments.
% for the high-dimensional cases, ...

\subsection{Experiments with synthetic 3D datasets} 

The synthetic data consists of $n=1.6\times 10^{5}$ random points sampled
uniformly from inside a 3D cube with edge length $\sqrt[3]{n}$.  We first solve
regularized linear systems associated with both Gaussian kernel and Mat\'ern-3/2 kernel, with $\mu = 0.0001$.

{The computational results are tabulated in Table \ref{tab:synthetic-all},
which shows the number of solver iterations required for convergence, the preconditioner setup (construction) time, and the time required
for the iterative solve. Rank estimation Algorithm \ref{alg:nys-estimation} is used to estimate the rank $k$ for each kernel matrix with the given length-scale information shown on the first row of each table. For both kernels,
we select 9 \emph{middle length-scales} to justify the robustness of \AFN. We also include two extreme length-scales in these tables to show the effectiveness of the preconditioning strategy using \AFN\ summarized in Algorithm \ref{alg:strategy} across a wide range of $l$.} 

We first note that, for unpreconditioned \texttt{CG},
the iteration counts first increase and then decrease as the length-scale decreases for both kernel functions. This confirms the result seen earlier in
Figure \ref{fig:varyingl} that it is the linear systems associated with
the \emph{middle length-scales} that are most difficult to solve due to the
unfavorable spectrum of these kernel matrices. We also observe that \texttt{FSAI} is very effective as a preconditioner for Gaussian kernel, with $l^2 = 0.1$ and Mat\'ern-3/2 kernel, with $l = 1.0$. \texttt{FSAI} is effective if the inverse of the kernel matrix can be approximated by a sparse matrix, which is the situation for both length-scales. We observe the opposite effect for the \texttt{RAN} preconditioner, which is
effective for large length-scales but is poor for small length-scales. 
% For small length-scales, the strategy is to use FSAI
% (identified by the cases where the iteration counts match for \texttt{AFN} and
% FSAI). For large length-scales when the $\bK_{11}$ block size is
% When the rank estimatochosen to be less than 2000 (the block sizes are indicated in square
% brackets in the table), the strategy is to use Nystr\"om preconditioning.
% For $l^2=100$ in the Gaussian kernel case and for $l=40$ in the
% Mat\'ern kernel case, \texttt{AFN} preconditioning is used. 
For middle length-scales, \texttt{AFN} substantially reduces the number of iterations compared to other methods. In particular, \texttt{AFN} yields almost a constant iteration number for Mat\'ern-3/2 kernel. For Gaussian kernel with $l^2=1000$ and Mat\'ern-3/2 kernel with $l=1000$, choosing \AFN\ as the Nystr\"om preconditioner form with the estimated rank significantly reduces the setup time for \AFN\ compared to \texttt{RAN}(3000) but still keeps roughly the same preconditioning effect.

\begin{table}[htp]
     \caption{Numerical results for the kernel matrices defined based on $n=1.6\times 10^{5}$ points sampled inside a 3D cube of edge length $\sqrt[3]{n}$. $``-"$ indicates that a run failed to converge within $500$ iterations. All experiments are run three times and reported as the average of three runs.} %We report the estimated rank $k$. When $k\geq 2000$ we use rank 2000$, and when $k<2000$ AFN deteriorates to Nystr\"om preconditioner with FPS ordering.
     % The numbers in the square brackets are the size of $\bK_{11}$. The numbers in the {round} brackets are standard deviation.
    \label{tab:synthetic-all}
     \tabcolsep2.8pt
\footnotesize
\begin{subtable}[h]{.98\textwidth}
\centering
\tabcolsep3.6pt
\begin{tabular}{c|ccccccccccc}
\hline  
$l^2$ & $1000$ & $65$ & $60$ & $55$ & $50$ & $45$& $40$ & $35$ & $30$ & $25$ & $0.1$\\
\hline
$k$ & $565$ & $9600$ & $9600$ & $9600$ & $9600$ & $12800$ & $12800$ & $12800$ & $16000$ & $19200$ & $160000$\\
\hline
\multicolumn{11}{c}{Iteration Counts} \\
\hline 
\texttt{CG} & 44.00 & - & - & - & - & - & - & - & - & - & 1.00 \\
\texttt{AFN} &  3.00 & 35.00 & 37.00 & 38.00 & 40.00 & 42.00 & 46.00 & 50.00 & 57.00 & 62.00 & 1.00 \\
\texttt{RAN} &  3.00  &72.67 & 101.33 & 140.67 & 199.33 & 284.33 & 409.33 & - & - & - & - \\
\texttt{FSAI} & - & - & - & - & - & - & - & - & - & - & 1.00   \\
\hline
\multicolumn{11}{c}{Setup Time (s)} \\
\hline
\texttt{AFN}  & 3.19 & 38.97 & 39.75 & 40.10 & 39.73 & 39.89 & 40.76 & 39.34 & 40.12 & 40.59 & 40.37\\
\texttt{RAN} & 27.28 & 27.59 & 26.46 & 27.33 & 29.05 & 29.95 & 31.18 & 31.56 & 33.64 & 33.97 & 35.07\\
\texttt{FSAI} & 10.00 & 9.91 & 10.02 & 10.16 & 9.72 & 9.87 & 10.14 & 9.71 & 10.01 & 9.84 & 13.22 \\
\hline
\multicolumn{11}{c}{Solve Time (s)} \\
\hline
\texttt{CG} & 9.72 & - & - & - & - & - & - & - & - & - & 1.75 \\
\texttt{AFN}  & 0.43 & 12.49 & 14.00 & 14.99 & 15.82 & 18.02 & 20.15 & 22.59 & 27.26 & 29.10 & 1.91 \\
\texttt{RAN} & 0.81 &  23.29 & 35.73 & 49.98 & 72.20 & 96.75 & 138.88& - & - & - & - \\
\texttt{FSAI}  & - & - & - & - & - & - & - & - & - & - & 1.27\\
\hline
\hline
    \end{tabular}
    \caption{Gaussian kernel with a fixed $\mu=0.0001$ and varying $l$. }
    \end{subtable}
\begin{subtable}[h]{.98\textwidth}
\centering
\begin{tabular}{c|ccccccccccc}
\hline  
$1/l$ &  1.0 & 0.065 & 0.060 & 0.055 & 0.050 & 0.045 & 0.040 & 0.035 & 0.030 & 0.025 & 0.001\\
\hline
$k$ &  160000 & 19200 & 16000 & 14080 & 12800 & 9600 & 9600 & 6400 & 6400 & 6400 & 178\\
\hline
\multicolumn{11}{c}{Iteration Counts} \\
\hline    
\texttt{CG} & 293.67 & - & - & - & - & - & - & - & - & - & 292.67 \\
\texttt{AFN} & 3.00 & 6.00 & 6.00 & 6.00 & 7.00 & 7.00 & 7.00 & 7.00 & 7.00 & 6.00 &  9.00   \\
\texttt{RAN} & -  & 454.00 & 404.33 & 355.67 & 308.33 & 263.00 & 220.67 & 181.00 & 142.00 & 108.33 & 4.00  \\
\texttt{FSAI} & 5.00 & - & - & - & - & - & - & - & - & - & - \\
\hline
\multicolumn{11}{c}{Setup Time (s)} \\
\hline 
        
\texttt{AFN} & 47.32 & 45.24 & 44.67 & 42.99 & 43.41 & 43.39 & 44.34 & 43.50 & 43.29 & 42.74  & 3.07  \\

\texttt{RAN} & 63.69 & 39.78 & 40.30 & 39.81 & 40.16 & 39.94 & 40.08 & 40.19 & 40.18 & 39.77 & 55.41 \\
\texttt{FSAI} & 13.98  & 10.31 & 10.18 & 10.19 & 10.29 & 10.26 & 10.30 & 10.28 & 10.02 & 9.84 & 13.80  \\
\hline
\multicolumn{11}{c}{Solve Time (s)} \\
\hline 
\texttt{CG} & 22.41 & - & - & - & - & - & - & - & - & - & 22.40 \\
\texttt{AFN} &  2.43 & 2.52 & 2.63 & 2.42 & 3.32 & 2.84 & 3.02 & 2.58 & 2.74 & 2.30 &  0.86 \\

\texttt{RAN} &  - & 116.37 & 99.32 & 86.87 & 74.04 & 63.98 & 53.58 & 42.24 & 32.19 & 25.93 & 1.36\\
\texttt{FSAI} &  3.71 & - & - & - & - & - & - & - & - & - & - \\
\hline
\hline
    \end{tabular}
    \caption{\matern kernel with a fixed $\mu=0.0001$ and varying $l$.}
    \end{subtable}
\end{table}

In Table \ref{tab:synthetic-mu}, we also compare the performance of \texttt{AFN}, \texttt{RAN} and \texttt{FSAI} for solving \eqref{eq:Problem} associated with the \matern kernel matrices with $l=20$ and varying $\mu$. It is easy to see that the performance of \texttt{RAN} and \texttt{FSAI}
deteriorates as the regularization parameter $\mu$ decreases while the iteration count of \texttt{AFN} remains almost a constant, which shows the improved robustness of \texttt{AFN} over \texttt{RAN} and \texttt{FSAI} with respect to $\mu$.

\begin{table}[htp]
     \caption{Numerical results for the \matern kernel matrices associated with $l=20$ and varying $\mu$ and $n=1.6\times 10^{5}$ points sampled inside a 3D cube of edge length $\sqrt[3]{n}$. $``-"$ indicates that a run failed to converge within $500$ iterations. All experiments are run three times and reported as the average of three runs.} %We report the estimated rank $k$. When $k\geq 2000$ we use rank 2000$, and when $k<2000$ AFN deteriorates to Nystr\"om preconditioner with FPS ordering.
     % The numbers in the square brackets are the size of $\bK_{11}$. The numbers in the {round} brackets are standard deviation.
    \label{tab:synthetic-mu}
     \tabcolsep2.8pt
\footnotesize
\centering
\tabcolsep5pt
\begin{tabular}{c|cccccccccc}
\hline  
$\mu$ &  $1e\!-\!1$ & $1e\!-\!2$ & $1e\!-\!3$ & $1e\!-\!4$ & $1e\!-\!5$ & $1e\!-\!6$ & $1e\!-\!7$ & $1e\!-\!8$ & $1e\!-\!9$ & $1e\!-\!10$ \\
\hline
\multicolumn{11}{c}{Iteration Counts} \\
\hline    
\texttt{CG} & - & - & - & - & - & - & - & - & - & -  \\
\texttt{AFN} &  15.00 & 12.00 & 6.00 & 7.00 & 7.00 & 7.00 & 7.00 & 7.00 & 7.00 & 7.00    \\
%\texttt{RAN} & 10.33 & 29.00 & 94.33 & 309.00 & - & - & - & - & - & -   \\
\texttt{RAN} & 10.33 & 29.00 & 93.33 & 311.33 & - & - & - & - & - & -   \\
%\texttt{FSAI} & 163.00 & 376.33  & - & - & - & - & - & - & - & -  \\
\texttt{FSAI} & 164.00 & 370.33  & - & - & - & - & - & - & - & -  \\
\hline
\multicolumn{11}{c}{Setup Time (s)} \\
\hline 
        
%\texttt{AFN} &   62.40 & 58.89 & 59.50 & 59.36 & 58.47 & 59.07 & 55.74 & 57.58 & 56.03 & 56.97    \\
\texttt{AFN} & 43.74 & 43.50 & 42.74 & 44.59 & 43.63 & 43.24 & 44.31 & 44.30 & 43.11 & 43.71    \\
%\texttt{RAN} &     54.64 & 54.03 & 53.88 & 53.12 & 53.50 & 53.04 & 54.33 & 53.20 & 54.67 & 59.44  \\
\texttt{RAN} &     40.25 & 39.71 & 39.14 & 40.86 & 39.92 & 40.13 & 40.40 & 40.34 & 39.80 & 40.35 \\
%\texttt{FSAI} &  13.73 & 14.43 & 14.62 & 14.59 & 14.14 & 14.13 & 13.94 & 13.96 & 14.74 & 14.59 \\
\texttt{FSAI} &  10.33 & 10.46 & 10.56 & 10.39 & 10.53 & 10.40 & 10.53 & 10.59 & 10.76 & 10.48 \\
\hline
\multicolumn{11}{c}{Solve Time (s)} \\
\hline 
\texttt{CG} &- & - & - & - & - & - & - & - & - & - \\
%\texttt{AFN} &    8.45 & 6.15 & 3.13 & 4.05 & 3.75 & 3.68 & 4.13 & 3.42 & 3.82 & 3.72   \\
\texttt{AFN} &    5.30 & 4.95 & 2.61 & 2.78 & 3.02 & 2.90 & 2.89 & 2.84 & 2.88 & 3.09 \\
%\texttt{RAN} &   4.32 & 11.01 & 31.42 & 100.00& - & - & - & - & - & -\\
\texttt{RAN} &   3.29 & 8.53 & 25.03 & 76.33 &   - & - & - & - & - & -\\
%\texttt{FSAI} & 33.27 & 75.60 & - & - & - & - & - & - & - & - \\
\texttt{FSAI} & 21.43 & 46.44 & - & - & - & - & - & - & - & - \\
\hline
\hline
    \end{tabular}
\end{table}

\subsection{Experiments with machine learning datasets}

% \begin{table}[htbp]
%     \caption{Test error using Gaussian kernel for two datasets.}
%      \label{tab:highd-accuracy}
%      \footnotesize
%         \centering
%      \tabcolsep5.8pt
%          \begin{tabular}{c|ccccccc}
%         %\hline parameter $l^2$ & $10$ & $2$ & $1$ & $0.2$ & $0.1$ & $0.02$ & $0.01$ \\
%         \hline parameter $l^2$ & $20$ & $10$ & $2$ & $1$ & $0.2$ & $0.1$ & $0.02$ \\
%         \hline 
%         \texttt{IJCNN1}& 7.98\%  & 6.77\% & 1.85\% & 1.25\% & 1.86\% & 2.43\% & 1.43\% \\% & 2.25\% \\
%         \texttt{SENSORLESS} & 48.29\% & 41.36\% & 10.03\% & 2.59\% & 0.26\% & 0.51\% & 0.99\% \\%& 1.33\% \\
%         \hline
%         \end{tabular}
% \end{table}

% \begin{table}[htbp]
%     \caption{Optimal parameters for two high-dimensional datasets.}
%      \label{tab:highd-params}
%      \footnotesize
%         \centering
%      \tabcolsep5.8pt
%          \begin{tabular}{c|cc}
%         %\hline parameter $l^2$ & $10$ & $2$ & $1$ & $0.2$ & $0.1$ & $0.02$ & $0.01$ \\
%         \hline optimal parameters & $l$ & $\sigma^2$  \\
%         \hline 
%         \texttt{IJCNN1}& 1  & 0.04999  \\
%         \hline
%         \texttt{Elevators} & 14 & 0.0439\\
%         \hline
%         \end{tabular}
% \end{table}

{We test the performance of \texttt{AFN} on two high-dimensional datasets, namely \texttt{IJCNN1} from LIBSVM \cite{chang2011libsvm} and \texttt{Elevators} from UCI \cite{dua2017uci} in this section. The training set of \texttt{IJCNN1} consists of $n=49990$ data points, with $22$ features and $2$ classes, while \texttt{Elevators} contains $n=16599$ data points, with $18$ features and $1$ target.}  %and it is preprocessed \cite{chang2001ijcnn},

{
Here, we perform experiments with the Gaussian kernel for \texttt{IJCNN1} and \matern kernel for \texttt{Elevators}. After conducting grid searches, we select the regularization parameter to be $\mu = n\times 10^{-6}$ for both datasets so that the test error of KRR is small for the optimal length-scale $l$ in our searches.
% The test errors with $\mu=n\times 10^{-6}$ and several different length-scales are shown in Table \ref{tab:highd-accuracy}.
%We vary $l^2$ from $0.01$ to $10$ to show the robustness of the preconditioning strategy over a range of length-scales.
We select $12$ length-scales {in two separate intervals}, which include the optimal length-scales for both datasets. The grid search method was used to determine the optimal length-scale for \texttt{IJCNN1}, resulting in a value of $l=1$ which is consistent with the findings in \cite{frangella_randomized_2021}. In contrast, for \texttt{Elevators}, the optimal length-scale was determined using GPyTorch \cite{wenger2022preconditioning} and found to be $l = 14$. 
Most of the length-scales within each interval correspond to middle length-scales. Two extreme length-scales are also considered here to show the effectiveness of \texttt{AFN} across a wide range of $l$. Since \texttt{FSAI} is less robust than \texttt{RAN}, we only compare \texttt{AFN} with \texttt{RAN} in this section.
%This interval of length-scales is selected due to the assumption that the optimal length-scales are high probability between $[0.1,  1]$, which is justified in Table \ref{tab:highd-accuracy}. 
As these are moderate-dimensional datasets (22 and 18 dimensions, as mentioned) and we do not have a fast kernel matrix-vector multiplication code for these datasets, the kernel matrix-vector multiplications were performed explicitly. Due to the high computational cost of FPS in high dimensions, we simply use uniform sampling to select the landmark points for \texttt{AFN} when the estimated rank is greater than $2000$ in these experiments.

\begin{table}[htp]
         \caption{Numerical results for the \texttt{IJCNN1} and \texttt{Elevator} datasets with Gaussian kernel and \matern kernel, respectively. $``-"$ indicates that a run failed to converge within $500$ iterations. All experiments are run three times and reported as the average of three runs. {In both tests we set $\mu = n\times 10^{-6}$.}}
     \label{tab:real-all}
     \tabcolsep1.8pt
\footnotesize
\begin{subtable}[h]{.98\textwidth}
\tabcolsep2.5pt
\centering
\begin{tabular}{c|cccccccccccc}
\hline  
$l^2$ & 10.0 & 1.0 & 0.9 & 0.8 & 0.7 & 0.6 & 0.5 & 0.4 & 0.3 & 0.2 & 0.1 & 0.01 \\
\hline
$k$ & 1278 & 8798 & 10397 & 11197 & 13197 & 14996 & 17396 & 20395 & 24394 & 29394 & 37192 & 48190\\
\hline
\multicolumn{13}{c}{Iteration Counts} \\
\hline 
\texttt{CG}  & 218.00   &- & - & - & - & - & - & - & - & 481.00 & 418.00  & 239.00  \\
%\texttt{AFN(Fps)}  & 3.00 & 44.00 & 46.00 & 47.00 & 48.00 & 49.00 & 49.00 & 49.00 & 46.00 & 43.00 & 34.00 & 10.00 \\
\texttt{AFN}  & 3.00 & 44.00 & 43.33 & 42.00 & 41.00 & 39.00 & 36.67 & 33.00 & 29.33 & 25.33 & 19.67 & 9.00\\
\texttt{RAN}  & 2.00  & 12.67 & 13.67 & 15.67 & 18.67 & 21.67 & 26.00 & 32.00 & 40.00 & 51.00 & 66.67  & 73.33 \\
% \texttt{FSAI}  & 128.0  & 47.0 & 45.0 & 39.0 & 36.0 & 33.0 & 30.0 & 27.0 & 25.0 & 21.0 & 14.0  & 4.0 \\
\hline
\multicolumn{13}{c}{Setup Time (s)} \\
\hline
%\texttt{AFN(Fps)}  & 6.61 & 15.97 & 16.09 & 15.53 & 14.90 & 14.78 & 15.08 & 14.56 & 14.80 & 14.92 & 14.52 & 14.97 \\
\texttt{AFN}  & 4.18 & 15.69 & 15.66 & 15.30 & 15.53 & 15.29 & 15.30 & 15.68 & 16.34 & 15.51 & 15.19 & 15.15 \\
\texttt{RAN} & 52.44  & 40.81 & 41.68 & 41.20 & 41.73 & 41.40 & 41.09 & 41.59 & 41.08 & 40.90 & 43.58  & 48.16 \\
% \texttt{FSAI} & 15.71  & 14.43 & 14.41 & 14.38 & 14.80 & 14.47 & 14.40 & 14.49 & 14.62 & 14.59 & 14.36  & 16.54 \\
\hline
\multicolumn{13}{c}{Solve Time (s)} \\
\hline
\texttt{CG}  & 30.63  &  - & - & - & - & - & - & - & - & 55.23 & 46.73  & 34.73 \\   
%\texttt{AFN(Fps)} & 1.10 & 7.75 & 9.49 & 8.67 & 9.96 & 9.19 & 8.86 & 9.32 & 8.78 & 8.20 & 7.11  & 2.31 \\
\texttt{AFN} & 0.97 & 8.07 & 8.99 & 8.24 & 7.47 & 7.55 & 6.88 & 6.50 & 5.94 & 5.05 & 5.01 & 2.44 \\
\texttt{RAN}  & 0.70 & 2.93 & 3.04 & 3.01 & 4.03 & 4.90 & 4.87 & 6.13 & 8.11 & 9.40 & 11.89  & 12.83 \\
% \texttt{FSAI}  & 24.65 & 8.42 & 8.19 & 7.17 & 6.73 & 6.40 & 5.57 & 4.97 & 4.57 & 4.05 & 2.71 & 4.0 \\
\hline
\hline
    \end{tabular}
   \caption{\texttt{IJCNN1} with Gaussian kernel.}
    \label{tab:ijcnn-all}
    \end{subtable}
\if 0
{
\begin{subtable}[h]{.98\textwidth}
\centering
\tabcolsep5.0pt
\begin{tabular}{c|cccccccccccc}
\hline  
$l^2$ & 10.0 & 1.0 & 0.9 & 0.8 & 0.7 & 0.6 & 0.5 & 0.4 & 0.3 & 0.2 & 0.1 & 0.01 \\
\hline
$k(49990)$ & 999 &  4999 & - & - & - & - & - & - & - & - & - & 48190\\
\hline
\multicolumn{13}{c}{Iteration Counts} \\
\hline 
\texttt{CG}  & 218.0   &- & - & - & - & - & - & - & - & - & -  & 239.0  \\
\texttt{AFN}  & 3.0  &113 & 114 & 114 & 116 & 118 & 121 & 123 & 121 & 112 & 84& 10.0    \\
\texttt{RAN}  & 2  &114.67 & 130.67 & 153.33 & 180.33 & 217 & 266.33 & 333 & 424 & - & -
& 73.33 \\
\texttt{FSAI}  & 128.0  & 118 & 108 & 99 & 90 & 82 & 75 & 67 & 57 & 44 & 27  & 4.0 \\
\hline
\multicolumn{13}{c}{Setup Time (s)} \\
\hline
\texttt{AFN}  & 13.29   &  139.35 & 140.60 & 140.58 & 143.30 & 145.85 & 148.54 & 152.51 & 149.72 & 138.88 & 104.63 & 16.41     \\
\texttt{RAN} & 52.44  & 149.25 & 168.40 & 198.18 & 232.10 & 279.31 & 342.43 & 426.67 & 543.55 & - & - & 48.16 \\
\texttt{FSAI} & 15.71  & 122.37 & 112.06 & 103.17 & 94.37 & 85.82 & 78.64 & 70.69 & 60.43 & 47.04 & 29.52   & 16.54 \\
\hline
\multicolumn{13}{c}{Solve Time (s)} \\
\hline
\texttt{CG}  & 30.63  &  - & - & - & - & - & - & - & - & - & -  & 34.73 \\   
\texttt{AFN} & 1.10 & 139.35 & 140.60 & 140.58 & 143.30 & 145.85 & 148.54 & 152.51 & 149.72 & 138.88 & 104.63 & 2.31 \\ 
\texttt{RAN}  & 0.70 &   149.25 & 168.40 & 198.18 & 232.10 & 279.31 & 342.43 & 426.67 & 543.55 & - & - & 12.83 \\
\texttt{FSAI}  & 24.65 &122.37 & 112.06 & 103.17 & 94.37 & 85.82 & 78.64 & 70.69 & 60.43 & 47.04 & 29.52  & 4.0 \\
\hline
\hline
    \end{tabular}
   \caption{\texttt{IJCNN1 with $\mu = 1e-8$}.}
    \label{tab:ijcnn-all-1e-8}
    \end{subtable}
}
\fi

\begin{subtable}[h]{.98\textwidth}
\centering
\tabcolsep1.7pt
\begin{tabular}{c|cccccccccccc}
\hline  
$1/l$ & 1.0 &0.1  &  0.09  &  0.08  &  0.07  &  0.06  &  0.05  &  0.04  &  0.03  &  0.02  &  0.01 & 0.0005\\
\hline
 $k$ & 16599 & 12083 &  11685 & 11419 & 11087 & 10822 & 10224 & 9427 & 8166 & 6838 & 5576 & 983 \\
\hline
\multicolumn{13}{c}{Iteration Counts} \\
\hline   
\texttt{CG} & 29.00 & 324.00 & 325.00 & 331.00 & 339.00 & 347.00 & 355.00 & 358.00 & 349.00 & 331.00 & 303.00 & 124.00\\
%\texttt{AFN(Fps)} & 3.00 & 22.00 & 22.00 & 24.00 & 25.00 & 27.00 & 30.00 & 32.00 & 40.00 & 49.00 & 60.00 & 23.00 \\
\texttt{AFN}  & 3.00 & 9.33 & 9.67 & 9.67 & 10.00 & 10.00 & 10.00 & 10.00 & 10.00 & 49.00 & 60.00 & 5.00 \\
\texttt{RAN} & 20.67 & 71.67 & 71.00 & 69.33 & 67.00 & 65.00 & 61.00 & 57.33 & 59.67 & 69.67 & 75.33 & 7.33 \\
% \texttt{FSAI}  & 2.00 & 9.00 & 9.00 & 10.00 & 10.00 & 11.00 & 12.00 & 13.00 & 15.00 & 21.00 & 29.00 & 51.00 \\
\hline
\multicolumn{13}{c}{Setup Time (s)} \\
\hline   
%\texttt{AFN(Fps)} & 9.91 & 5.58 & 5.68 & 5.78 & 5.71 & 5.90 & 5.57 & 5.64 & 5.66 & 6.07 & 5.79 & 6.94 \\
\texttt{AFN} & 9.58 & 5.34 & 5.45 & 5.79 & 5.60 & 5.48 & 5.42 & 5.47 & 5.36 & 5.76 & 6.06 & 1.94 \\
\texttt{RAN} & 38.78 & 28.64 & 44.28 & 42.45 & 30.86 & 32.53 & 44.61 & 36.91 & 39.38 & 38.32 & 35.72 & 34.90 \\
% \texttt{FSAI}  & 5.60 & 3.74 & 3.72 & 3.76 & 3.75 & 3.74 & 3.74 & 3.78 & 3.76 & 3.79 & 3.81 & 3.81 \\
\hline
\multicolumn{13}{c}{Solve Time (s)} \\
\hline 
\texttt{CG} & 0.54 & 3.65 & 3.73 & 3.71 & 3.79 & 3.92 & 4.01 & 4.06 & 3.93 & 3.75 & 3.48 & 1.39\\
%\texttt{AFN(Fps)} & 0.21 & 0.74 & 0.90 & 0.94 & 0.91 & 0.98 & 1.10 & 1.13 & 1.36 & 1.70 & 2.17 & 2.20  \\
\texttt{AFN} & 0.21 & 0.38 & 0.40 & 0.43 & 0.40 & 0.40 & 0.49 & 0.39 & 0.38 & 1.83 & 2.22 & 0.11 \\
\texttt{RAN} & 0.68 & 2.04 & 1.84 & 2.08 & 1.82 & 1.76 & 1.67 & 1.49 & 1.76 & 1.88 & 2.00 & 0.28 \\
% \texttt{FSAI}  & 0.16 & 0.34 & 0.33 & 0.36 & 0.37 & 0.40 & 0.43 & 0.46 & 0.53 & 0.74 & 1.00 & 1.77 \\
\hline
\hline
    \end{tabular}
     \caption{\texttt{Elevators} with \matern kernel.}
     \label{tab:elevator-all}
    \end{subtable}
\end{table}
We report the computational results in Table \ref{tab:real-all}. 
%REMOVED:%Despite the fact that data points now lie in much higher dimensions, 
The patterns of the change of iteration counts, setup time and solution time with respect to the length-scales on both datasets are similar to those observed in the 3D experiments. 
First, the iteration counts of unpreconditioned \texttt{CG} first increases and then decreases as $l$ decreases in both datasets. This indicates that the spectrum of the kernel matrices associated with high-dimensional datasets {could be related to} those associated with low-dimensional data. 
\texttt{AFN} is {again} able to significantly reduce the iteration counts compared to unpreconditioned \texttt{CG} in all tests.
{We notice that the iteration count of the \texttt{RAN} preconditioned \texttt{CG} increases as the estimated rank increases on the \texttt{IJCNN1} dataset. This implies that in order to converge in the same number of iterations as $l$ becomes smaller, \texttt{RAN} type preconditioners need to keep increasing the Nystr\"om approximation rank $k$ and thus require longer setup time and more storage. $\texttt{AFN}$ requires smaller setup time in all of the experiments and leads to smaller iteration counts when $l^2<0.4$ on the \texttt{IJCNN1} dataset and all length-scales on the \texttt{Elevators} dataset. In addition,
we can also observe that \texttt{AFN} yields the smallest total time in all of the experiments on both datasets compared with $\texttt{RAN}$.

\if
Since it is also observed that the performance gap between \AFN\, and \texttt{FSAI}, is very small for large-rank matrices when \texttt{FSAI}, performs best while \AFN\, consistently outperforms \texttt{FSAI}, for small-rank and middle-rank matrices, so \texttt{FSAI}, will also not be reported. Considering only \RAN\, may be better than \AFN\, for small-rank matrices in the sense of fewer construction costs and less setup time, we only test \AFN\, against \RAN\, to justify the robustness and superiority of \AFN\, for middle-rank matrices. For both IJCNN and Sensorless datasets, \AFN\, performs consistently well. All metrics of it are very stable through all the length-scales. However, with the decrease of the lengthscales, iteration counts of \RAN\, keep increasing. Especially, the setup time of \RAN\, is roughly 2.5 times larger than that of \AFN. For both kernels, \AFN\, always has a smaller total time compared to \RAN. 
}
\fi

\section{Conclusion}
\label{sec:conclusion}

In this paper, we introduced an approximate block factorization of
$\bK + \mu \bI$
that is inspired by the existence of a Nystr\"om approximation,
$\bK \approx \bK_{X,X_k} \bK_{X_k,X_k}^{-1} \bK_{X_k,X}$.
The approximation is designed to efficiently handle the case where
$k$ is large, by using sparse approximate inverses.

We further introduced a preconditioning strategy that is robust
for a wide range of length-scales. When the length-scale is large, existing Nystr\"om
preconditioners work well. For the challenging length-scales, the \texttt{AFN}
preconditioner proposed in this paper is the most effective. We justify the use of FPS to select landmark points in order to construct an accurate and stable \texttt{AFN} preconditioner and propose a rank estimation algorithm using a subsampling of the entire dataset.

\shifanrewriting{It is important to note that in high-dimensional settings, the effectiveness of screening effects diminishes, as indicated by \cite{schafer_sparse_2021,schafer_compression_2021}. This is attributed to the reduced representational capacity of Euclidean distance for spatial similarity in high-dimensional spaces, a concept further explored by \cite{domingos2012few}. Consequently, the \texttt{FSAI} approach for approximating the inverse of the Schur complement can be less effective for high-dimensional datasets, such as those commonly found in machine learning, as it is for lower-dimensional ones, such as those in spatial statistics. Nevertheless, in the realm of machine learning, kernel methods -- including the kernel trick in Support Vector Machines (SVMs), Kernel Ridge Regression (KRR), and Gaussian Process Regression (GPR) -- fundamentally rely on the premise that spatial similarity correlates with data similarity and the proposed \texttt{AFN} method retains its relevance as long as this assumption is valid. For datasets with  high dimensionality, we plan to first apply a transformation to map the data points to lower-dimensional manifolds. This transformation, as discussed in the survey  \cite{binois2022survey}, ensures that Euclidean distance continues to effectively represent similarity in these reduced-dimensional spaces. In future work, we will also study whether the dependence on ambient
dimension in Theorem \ref{thm:FPS is near optimal} can be reduced to
the intrinsic dimension of the data manifold and apply \texttt{AFN} to accelerate the convergence of stochastic trace estimation and gradient based optimization algorithms. 
}

% \textcolor{red}{
% Some problems to be dealt with in the future:
% \begin{enumerate}
%     \item DPP, FPS, max volume, etc. All these methods aim to sample the data as diverse as possible. However, there is no formal definition of diverse sampling at all. Therefore, the fundamental question to ask is: what is the definition of diverse sampling? Finally, the error should be measured by the best rank k approximation error.
% \end{enumerate}
% }

\appendix

\section{Proof of Theorem 4.1}
\label{sec:fast_fps}

{The proof of Theorem \ref{corollary:approximation fill distance 2-norm bound} relies on Theorem \ref{thm:interpolation-bound} from \cite{belkin_approximation_2018}. Theorem \ref{thm:interpolation-bound} states that any bounded map $\mathcal{T}$ from a Hilbert space to a RKHS $\mathcal{H}$ corresponding to certain smooth radial kernels {such as the Gaussian kernel defined in \eqref{eq:gaussianf} and the inverse multiquadrics kernel defined in \eqref{eq:inverseMultiquadric}} always admits a low rank approximation in {$L_{\mu}^{2} := \{f(x) | \int |f(x)|^2d\mu < \infty \}$.} Furthermore, the approximation error bound can be quantified by fill distance.}
Before we proceed to Theorem \ref{thm:interpolation-bound}, we first introduce a few notations that will be used in the statement of Theorem \ref{thm:interpolation-bound}. On a domain $\Omega$, the integral operator $\mathcal{K}_{\mu}: L_{\mu}^{2} \rightarrow \mathcal{H}$ is defined as:
\begin{align*}
   \mathcal{K}_{\mu}(f)(\cdot) = \int \mathcal{K}(\cdot, \bx)f(\cdot)d\mu.
\end{align*}
The restriction operator $\mathcal{R}_{\mu} : \mathcal{H} \rightarrow L_{\mu}^{2}$ is defined as the restriction of $f\in \mathcal{H}$ to the support of $\mu$, interpolation operator $\mathcal{S}_{X_k}: \mathcal{H} \rightarrow \mathcal{H}$ is defined by interpolating the values of $f$ on a subset $X_k\subset \Omega$ as:
$$
\mathcal{S}_{X_k}(f)(\bx) = \sum_{i=1}^{k}\alpha_i\mathcal{K}(\bx_i,\bx),$$ 
with $(\alpha_1,\dots,\alpha_k)^{\top} = \bK_{X_k,X_k}^{-1}(f(\mathbf{x}_1),\dots, f(\mathbf{x}_k))^{\top}.$
Since the range of $\mathcal{R}_{\mu}$ and $\mathcal{S}_{X_k}$ is different, the following norm is used to measure their difference:
\iffalse
{The proof of Theorem \ref{thm:nystrom-bound} is based on the observation that the uniform discrete measure on $X$ can be written as
\begin{equation}
\mu_{{X}} =\frac{1}{n} \sum_{i=1}^{n} \delta_{\bx_i}.  
\end{equation}
Then we can simplify the expression of the restriction operator and the interpolation operator to $\bI\in\mathbb{R}^{n\times n}$ and $\bK_{X,X_k}\bK_{X_k,X_k}^{-1}f{(X_k)}$, respectively. The detailed proof can be found in  \Cref{sec:fast_fps}.}
\fi
$$\|\mathcal{R}_{\mu}-\mathcal{S}_{X_k}\|_{\mathcal{H}\rightarrow L_{\mu}^{2}} := \max_{f\in \mathcal{H},f\neq 0}\frac{\|(\mathcal{R}_{\mu}-\mathcal{S}_{X_k})(f)\|_{L_{\mu}^2}}{\|f\|_{\mathcal{H}}}.$$
\begin{theorem}[\cite{belkin_approximation_2018}]
\label{thm:interpolation-bound}
Let $\mathcal{H}$ denote the RKHS corresponding to the kernel $\mathcal{K}$. Given a probability measure $\mu$ on $\Omega$ and a set $X_k\subset \Omega$, there exist constants $C',~C''>0$ such that
\begin{equation}
    \|\mathcal{R}_{\mu} - \mathcal{S}_{X_k}\|_{\mathcal{H}\rightarrow L_{\mu}^{2}} < C'\exp(-C''/h_{X_k}).
\end{equation}
\iffalse
where $\mathcal{R}_{\mu} : \mathcal{H} \rightarrow L_{\mu}^{2}$ is the restriction operator which does not change the function values at any point but just the domain of the definition to the support of $\mu$,  $\mathcal{S}_{X_k}: \mathcal{H} \rightarrow \mathcal{H}$ is the interpolation operator that interpolates the values of $f$ on the set $X_k\in \Omega$ as:
$$
\mathcal{S}_{X_k}(f)(\bx) = \sum_{i=1}^{k}\alpha_i\mathcal{K}(\bx_i,\bx),$$
\fi
\end{theorem}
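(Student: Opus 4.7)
The plan is to prove the operator norm bound by reducing it to a pointwise bound via the reproducing kernel structure, invoking classical exponential decay estimates for the \emph{power function} of scattered data interpolation in the analytic-kernel regime, and then integrating against the probability measure $\mu$. Concretely, the chain of reductions is: operator norm $\Rightarrow$ uniform bound on $P_{X_k}$ $\Rightarrow$ integrated bound in $L^2_{\mu}$.

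First I would show that the interpolation map $\mathcal{S}_{X_k}$ is precisely the $\mathcal{H}$-orthogonal projection of $f$ onto the finite-dimensional subspace $V_{X_k} := \mathrm{span}\{\mathcal{K}(\cdot,\bx_{k_i}) : i = 1,\ldots,k\}$, since matching nodal values is equivalent to imposing the Galerkin conditions $\langle f - \mathcal{S}_{X_k}(f), \mathcal{K}(\cdot,\bx_{k_i})\rangle_{\mathcal{H}} = 0$ for all $i$. Then for any $\bx \in \Omega$, since $f - \mathcal{S}_{X_k}(f) \perp V_{X_k}$, one can subtract the projection of $\mathcal{K}(\cdot,\bx)$ onto $V_{X_k}$ inside the reproducing-property identity and apply Cauchy--Schwarz:
$$|f(\bx) - \mathcal{S}_{X_k}(f)(\bx)| = \bigl|\langle f - \mathcal{S}_{X_k}(f),\,\mathcal{K}(\cdot,\bx) - \Pi_{V_{X_k}}\mathcal{K}(\cdot,\bx)\rangle_{\mathcal{H}}\bigr| \leq P_{X_k}(\bx)\|f\|_{\mathcal{H}},$$
where $P_{X_k}(\bx)^2 = \mathcal{K}(\bx,\bx) - \bK_{\{\bx\},X_k}\bK_{X_k,X_k}^{-1}\bK_{X_k,\{\bx\}}$ is the classical power function measuring the distance in $\mathcal{H}$ from the kernel section $\mathcal{K}(\cdot,\bx)$ to $V_{X_k}$.

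Next I would invoke the exponential decay of $P_{X_k}$ for analytic radial kernels. For Gaussian kernels~\eqref{eq:gaussianf} and inverse multiquadrics~\eqref{eq:inverseMultiquadric}, whose Fourier transforms decay super-algebraically, the Madych--Nelson--Wendland theory (e.g., Wendland, \emph{Scattered Data Approximation}, Thm.~11.22) shows that on any domain $\Omega$ satisfying an interior cone condition there exist constants $C', C'' > 0$ depending only on $\mathcal{K}$ and $\Omega$ with
$$P_{X_k}(\bx) \leq C'\exp(-C''/h_{X_k}) \qquad \forall\,\bx \in \Omega.$$
The proof of this pointwise estimate constructs a local polynomial of degree $\ell$ that reproduces $\mathcal{K}(\cdot,\bx)$ on a star of radius $O(h_{X_k})$ around $\bx$, yielding an algebraic error $C_\ell h_{X_k}^\ell$; the analyticity of $\mathcal{K}$ controls $C_\ell$ so that optimizing over $\ell \sim 1/h_{X_k}$ converts the polynomial rate into an exponential one.

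Finally, since $\mu$ is a probability measure on $\Omega$, the pointwise bound integrates to
$$\|(\mathcal{R}_{\mu} - \mathcal{S}_{X_k})(f)\|_{L^2_\mu}^2 = \int_\Omega |f(\bx) - \mathcal{S}_{X_k}(f)(\bx)|^2\, d\mu(\bx) \leq \|f\|_{\mathcal{H}}^2\, \sup_{\bx\in\Omega} P_{X_k}(\bx)^2 \leq (C')^2 e^{-2C''/h_{X_k}}\|f\|_{\mathcal{H}}^2,$$
and dividing by $\|f\|_{\mathcal{H}}$ and taking the supremum over $f \neq 0$ yields the claimed operator norm estimate (with possibly relabeled constants). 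The main obstacle is Step 2: the exponential decay of the power function is a genuinely analytic statement that hinges on the super-algebraic decay of $\hat{\mathcal{K}}$, and the dependence of $C', C''$ on the shape of $\Omega$ (cone aperture) and on the kernel scale must be tracked carefully; for kernels of only finite smoothness, such as \matern, this approach degrades to an algebraic rate $h_{X_k}^s$, so restricting to Gaussian/inverse multiquadric kernels is essential to obtain the exponential form.
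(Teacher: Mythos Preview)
Your proof is correct and follows the standard route in scattered-data approximation: identify $\mathcal{S}_{X_k}$ with the $\mathcal{H}$-orthogonal projection, bound the pointwise error by the power function via Cauchy--Schwarz, invoke the Madych--Nelson--Wendland exponential decay of the power function for kernels with super-algebraically decaying Fourier transforms, and then integrate against the probability measure $\mu$. All four steps are sound as you have written them.

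However, the paper does \emph{not} give its own proof of this statement. Theorem~\ref{thm:interpolation-bound} is quoted verbatim from \cite{belkin_approximation_2018} and used as a black box in the proof of Theorem~4.1; the appendix only carries out the reduction from the Nystr\"om error $\|\bK-\Knys\|$ to the operator norm $\|\mathcal{R}_\mu-\mathcal{S}_{X_k}\|_{\mathcal{H}\to L^2_\mu}$ in the discrete-measure setting. So there is nothing to compare against: what you have supplied is effectively a self-contained derivation of the cited lemma, and your argument is essentially the classical one (e.g., Wendland, \emph{Scattered Data Approximation}, Chapter~11) that underlies the result in \cite{belkin_approximation_2018}. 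Your closing caveat that the exponential rate degrades to an algebraic one for finitely smooth kernels such as the Mat\'ern family is also accurate and worth keeping in mind, since the paper applies Theorem~4.1 only to Gaussian and inverse multiquadric kernels for precisely this reason.
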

\iffalse
with $(\alpha_1,\dots,\alpha_k)^{\top} = \bK_{X_k,X_k}^{-1}(f(\mathbf{x}_1),\dots, f(\mathbf{x}_k))^{\top},$
and the norm $\Vert f\Vert_{L_{\mu}^2}:=\Vert \mathcal{R}_{\mu}f \Vert_{L_{\mu}^2}$. Here, the constants $C'$ and $C''$ are independent of $X_k$ and $\|\mathcal{R}_{\mu}-\mathcal{S}_{X_k}\|_{\mathcal{H}\rightarrow L_{\mu}^{2}} := \max_{f\in \mathcal{H},f\neq 0}\frac{\|(\mathcal{R}_{\mu}-\mathcal{S}_{X_k})(f)\|_{L_{\mu}^2}}{\|f\|_{\mathcal{H}}}$.
\fi
When $\Omega=X = \{\bx_1, \dots, \bx_n\}$ and the uniform discrete measure $\mu_{{X}} =\frac{1}{n} \sum_{i=1}^{n} \delta_{\bx_i}$ is used with $\delta_{\bx_i}$ being the Dirac measure at point $\bx_i$, we have
$$
\mathcal{K}_{\mu_{{X}}}(f)(\bx) = \frac{1}{n}\sum_{i=1}^{n} \mathcal{K}
(\bx_{i},\bx)f(\bx_{i})$$
and $\mathcal{H}_{{X}}= \operatorname{span}\{K(\bx_1,\cdot),\dots,K(\bx_n,\cdot)\}.$ 
The integral operator, interpolation operator and restriction operator can then be written in the matrix form as $\mathcal{K}_{\mu_{{X}}}(f)(X) =  \frac{1}{n}\mathbf{K}f(X)$,  $\mathcal{S}_{X_k}(f)({X}) = \bK_{X,X_k}\bK_{X_k,X_k}^{-1}{f}({X_k})$, and $\mathcal{R}_{\mu_{{X}}} = \bI\in \mathbb{R}^{n\times n}$, respectively.
Since $\mathcal{R}_{\mu_{X}}\circ \mathcal{K}_{\mu_{{X}}}=\mathcal{K}_{\mu_{{X}}}$, we have $$\mathcal{K}_{\mu_{{X}}}-\mathcal{S}_{{X}_k}\circ \mathcal{K}_{\mu_{{X}}}=(\mathcal{R}_{\mu_{X}} - \mathcal{S}_{X_k})\circ \mathcal{K}_{\mu_{{X}}}.$$
Thus, we can get the following inequality
\begin{align*}
     \|(\mathcal{R}_{\mu_{X}} - \mathcal{S}_{X_k})\circ \mathcal{K}_{\mu_{{X}}}\|_{L_{\mu_{{X}}}^{2}\rightarrow L_{\mu_{{X}}}^{2}}\leq  \|(\mathcal{R}_{\mu_{X}} - \mathcal{S}_{X_k})\|_{\mathcal{H}_{{X}}\rightarrow L_{\mu_{{X}}}^{2}} \|\mathcal{K}_{\mu_{X}}\|_{ L_{\mu_{{X}}}^{2} \rightarrow \mathcal{H}_{{X}}}.
\end{align*}
Based on Theorem \ref{thm:interpolation-bound}, we know that 
\begin{align*}
     \|(\mathcal{R}_{\mu_{X}} - \mathcal{S}_{X_k})\circ \mathcal{K}_{\mu_{{X}}}\|_{L_{\mu_{{X}}}^{2}\rightarrow L_{\mu_{{X}}}^{2}}\leq  C'\exp(-C''/h_{X_k}) \|\mathcal{K}_{\mu_{X}}\|_{ L_{\mu_{{X}}}^{2} \rightarrow \mathcal{H}_{{X}}}.
\end{align*}
In the next theorem, we will derive an error estimate for the Nystr\"om approximation error by further proving
\begin{align*}
\|(\mathcal{R}_{\mu_{X}} - \mathcal{S}_{X_k})\circ \mathcal{K}_{\mu_{{X}}}\|_{L_{\mu_{X}}^2\rightarrow L_{\mu_{{X}}}^{2}} &= \frac{1}{n}\Vert \bK- \Knys\Vert, 
\end{align*}
and $ \|\mathcal{K}_{\mu_{X}}\|_{L_{\mu_{{X}}}^{2}\rightarrow \mathcal{H}_{{X}}}^2 = \sqrt{\mathbf{\Vert K\Vert}/n}$.

\begin{manualtheorem}{4.1}The Nystr\"om approximation $\Knys = \bK_{{X},X_k} \bK_{X_k,X_k}^{-1}\bK_{X_k,X}$ to $\bK$ using the landmark points $X_{k} = \{\bx_{k_i}\}_{i=1}^{k}$ has the following error estimate
\begin{equation}
% \label{ineq: discrete fill distance bound}
   \Vert \bK- \Knys\Vert < \sqrt{n\Vert \bK \Vert}  C'\exp(-C''/h_{X_k}),
\end{equation}
where $C'$ and $C^{''}$ are constants independent of $X_k$.
\end{manualtheorem}
\begin{proof}
\iffalse
Define the integral operator $\mathcal{K}_{\mu}: L_{\mu}^{2} \rightarrow \mathcal{H}$, 
\begin{align*}
   \mathcal{K}_{\mu}(f)(\cdot) = \int \mathcal{K}(\cdot, \bx)f(\cdot)d\mu.
\end{align*}
When $\Omega=X$, the uniform discrete measure on $X$ can be written as
$$\mu_{{X}} =\frac{1}{n} \sum_{i=1}^{n} \delta_{\bx_i}.$$
Thus, $\mathcal{K}_{\mu_{{X}}}(f)(\bx) = \frac{1}{n}\sum_{i=1}^{n} \mathcal{K}(\bx_{i},\bx)f(\bx_{i})$ and $\mathcal{H}_{{X}}= \operatorname{span}\{K(\bx_1,\cdot),\dots,K(\bx_n,\cdot)\}.$ 
The integral operator, interpolation operator and restriction operator then can be written as $\mathcal{K}_{\mu_{{X}}}(f)(X) =  \frac{1}{n}\mathbf{K}f(X)$,  $\mathcal{S}_{X_k}(f)({X}) = \bK_{X,X_k}\bK_{X_k,X_k}^{-1}{f}({X_k})$, and $\mathcal{R}_{\mu_{{X}}} = \bI\in \mathbb{R}^{n\times n}$ respectively.
% and $\mathcal{K}_{\mu} : L_{\mu}^{2} \rightarrow \mathcal{H}_{n} $ where  $\|(R_{\mu} - S_{X})\circ K_{\mu}\|_{\mathcal{H}\rightarrow L_{\mu}^{2}} = \|K - K_{nys}\|_{\mathcal{H}\rightarrow L_{\mu}^{2}} \leq \|(R_{\mu} - S_{X})\|_{\mathcal{H}\rightarrow L_{\mu}^{2}}\| K_{\mu}\|_{\mathcal{H}\rightarrow L_{\mu}^{2}}$.
\fi 
Since $\mathcal{R}_{\mu_{X}}\circ \mathcal{K}_{\mu_{{X}}}=\mathcal{K}_{\mu_{{X}}}$, we have $$\mathcal{K}_{\mu_{{X}}}-\mathcal{S}_{{X}_k}\circ \mathcal{K}_{\mu_{{X}}}=(\mathcal{R}_{\mu_{X}} - \mathcal{S}_{X_k})\circ \mathcal{K}_{\mu_{{X}}}.$$
Notice $\mathcal{K}_{\mu_{{X}}}$ is a map from $L_{\mu_{{X}}}^{2}$ to $\mathcal{H}_{{X}}$ and from the definition of the norm, we get the following inequality
\begin{equation}
    \|(\mathcal{R}_{\mu_{X}} - \mathcal{S}_{X_k})\circ \mathcal{K}_{\mu_{{X}}}\|_{L_{\mu_{{X}}}^{2}\rightarrow L_{\mu_{{X}}}^{2}}\leq  \|(\mathcal{R}_{\mu_{X}} - \mathcal{S}_{X_k})\|_{\mathcal{H}_{{X}}\rightarrow L_{\mu_{{X}}}^{2}} \|\mathcal{K}_{\mu_{X}}\|_{ L_{\mu_{{X}}}^{2} \rightarrow \mathcal{H}_{{X}}}.
\end{equation}

Based on Theorem \ref{thm:interpolation-bound}, we obtain 
\begin{equation}
 \|(\mathcal{R}_{\mu_{X}} - \mathcal{S}_{X_k})\|_{\mathcal{H}_{{X}}\rightarrow L_{\mu_{{X}}}^{2}} < C'\exp(-C''/h_{X_k}).
\end{equation}

% Next, we try to show that $\|(\mathcal{R}_{\mu_{X}} - \mathcal{S}_{X_k})\circ \mathcal{K}_{\mu_{{X}}}\|_{ L_{\mu_{{X}}}^{2}\rightarrow L_{\mu_{{X}}}^{2}}=\Vert \bK-\Knys\Vert$ and $\|\mathcal{K}_{\mu_{X}}\|_{\mathcal{H}_{{X}}\rightarrow L_{\mu_{{X}}}^{2}} = \Vert \bK\Vert$.

First, recall that 
\begin{align*}
\|(\mathcal{R}_{\mu_{X}} - \mathcal{S}_{X_k})\circ \mathcal{K}_{\mu_{{X}}}\|_{ L_{\mu_{{X}}}^{2}\rightarrow L_{\mu_{{X}}}^{2}} &= \max_{f\in  L_{\mu_{{X}}}^{2},f\neq 0}\frac{\|(\mathcal{R}_{\mu_{{X}}} - \mathcal{S}_{X_k})\circ \mathcal{K}_{\mu_{{X}}}(f)\|_{L_{\mu_{X}}^2}}{\|f\|_{ L_{\mu_{{X}}}^{2}}}, 
\end{align*}
and 
\begin{align*}
    \|(\mathcal{R}_{\mu_{{X}}} - \mathcal{S}_{X_k})\circ \mathcal{K}_{\mu_{{X}}}(f)\|_{L_{\mu_{X}}^2}& = \sqrt{\int_{X} \left((\mathcal{R}_{\mu_{{X}}} - \mathcal{S}_{X_k})\circ \mathcal{K}_{\mu_{{X}}}(f)\right)^2 d\mu_{X}} \\
    &= \sqrt{\frac{1}{n}\sum_{i=1}^{n}\left((\mathcal{R}_{\mu_{{X}}} - \mathcal{S}_{X_k})\circ \mathcal{K}_{\mu_{{X}}}(f)(\bx_{i})\right)^2}\\
    &= \sqrt{\frac{1}{n}\sum_{i=1}^{n}\left((\mathcal{R}_{\mu_{{X}}}\circ \mathcal{K}_{\mu_{{X}}}(f)(\bx_{i}) - \mathcal{S}_{X_k}\circ\mathcal{K}_{\mu_{{X}}}(f)(\bx_{i}))\right)^2}.
\end{align*}

Define two vectors based on the two function evaluations at $X$:
\begin{align*}
   \mathbf{F}_{1}=(\mathcal{R}_{\mu_{{X}}}\circ \mathcal{K}_{\mu_{{X}}}(f))(X), \quad \text{and}\quad
\mathbf{F}_2= (\mathcal{S}_{X_k}\circ \mathcal{K}_{\mu_{{X}}}(f))(X).
\end{align*}
Then we obtain
\begin{align*}
    \|(\mathcal{R}_{\mu_{{X}}} - \mathcal{S}_{X_k})\circ \mathcal{K}_{\mu_{{X}}}(f)\|_{L_{\mu_{X}}^2}& = \frac{1}{\sqrt{n}}\Vert \mathbf{F}_1-\mathbf{F}_2\Vert .
\end{align*}
Notice that $\mathbf{F}_1$ and $\mathbf{F}_2$ can also be written as
\begin{align*}
   \mathbf{F}_{1}=\frac{1}{n}\bK f(X), \quad \text{and}\quad
\mathbf{F}_2= \frac{1}{n}\bK_{X,X_k}\bK_{X_k,X_k}^{-1}\bK_{X_k,X}f(X).
\end{align*}
Thus, 
\begin{align*}
    \|(\mathcal{R}_{\mu_{{X}}} - \mathcal{S}_{X_k})\circ \mathcal{K}_{\mu_{{X}}}(f)\|_{L_{\mu_{X}}^2}& = \frac{1}{\sqrt{n}}\Vert \frac{1}{n}\bK f(X)-\frac{1}{n}\bK_{X,X_k}\bK_{X_k,X_k}^{-1}\bK_{X_k,X}f(X)\Vert\\
    & =\frac{1}{n^{3/2}}\Vert (\bK-\Knys)f(X)\Vert .
\end{align*}
On the other hand, 
\begin{align*}
    \|f\|_{L_{\mu_{X}}^2}& = \sqrt{\int_{X} f^2 d\mu_{X}} = \sqrt{\frac{1}{n}\sum_{i=1}^{n}f(\bx_{i})^2}= \frac{1}{\sqrt{n}}\Vert f(X)\Vert.
\end{align*}

% since the norm is obtained at the eigenpair $(\lambda_1, f_{1})$, we can show $\Vert f_1\Vert_{\mathcal{H}_X} = \frac{1}{\lambda_1} \Vert f_1 \Vert_{L_{\mu_{X}}^2}$
% \begin{align*}
%     \Vert f_1\Vert_{\mathcal{H}_X} = \frac{1}{\lambda_1}\langle f_{1},  \mathcal{K}_{\mu_{{X}}}f_{1} \rangle_{\mathcal{H}_X} = \frac{1}{\lambda_1}\langle f_{1},  \mathcal{K}_{\mu_{{X}}}f_{1} \rangle_{L_{\mu_{X}}^2} = \frac{1}{\lambda_1} \Vert f_1 \Vert_{L_{\mu_{X}}^2} = \frac{1}{\lambda_1} \Vert \mathbf{f_1} \Vert_{2}.
% \end{align*}
% The last second to last equality comes from the fact that $ \langle\mathcal{K}_{\mu_{{X}}}^{1/2}f_{1}  \mathcal{K}_{\mu_{{X}}}^{1/2}f_{1}\rangle_{\mathcal{H}_X} = \Vert f_1 \Vert_{L_{\mu_{X}}^2}$ since $\mathcal{K}_{\mu_{{X}}}^{1/2}$ is a isometric embedding from $L_{\mu_{X}}^2$ to $\mathcal{H}_X$.
As a result, we get 
\begin{align*}
\|(\mathcal{R}_{\mu_{X}} - \mathcal{S}_{X_k})\circ \mathcal{K}_{\mu_{{X}}}\|_{L_{\mu_{X}}^2\rightarrow L_{\mu_{{X}}}^{2}} &= \max_{f\in L_{\mu_{X}}^2,f\neq 0}\frac{\|(\mathcal{R}_{\mu_{{X}}} - \mathcal{S}_{X_k})\circ \mathcal{K}_{\mu_{{X}}}(f)\|_{L_{\mu_{X}}^2}}{\|f\|_{L_{\mu_{X}}^2}}\\
& = \max_{f\in L_{\mu_{X}}^2,f\neq 0}\frac{\Vert (\bK-\Knys)f(X)\Vert}{n\Vert f(X)\Vert}\\
&=  \max_{\mathbf{f}\in \mathbb{R}^{n},\mathbf{f}\neq 0}\frac{\Vert (\bK-\Knys)\mathbf{f}\Vert}{n\Vert \mathbf{f}\Vert} = \frac{1}{n}\Vert \bK- \Knys\Vert .
\end{align*}

%Meanwhile, we have
%\begin{align*}
%    \|\mathcal{K}_{\mu_{X}}\|_{L_{\mu_{{X}}}^{2}\rightarrow \mathcal{H}_{{X}}} = & \max_{f\in L^2_{{\mu_X}},f\neq 0}\frac{\|\mathcal{K}_{\mu_{{X}}}(f)\|_{\mathcal{H}_{{X}}}}{\|f\|_{L_{\mu_{X}}^2}}    = \frac{\lambda_1  \Vert f_1\Vert_{\mathcal{H}_X}}{\|f_1\|_{L_{\mu_{X}}^2}},
%\end{align*}
%where  $\lambda_1$ is the largest eigenvalue of $\mathcal{K}_{\mu}$ and $f_{1}$ is its corresponding eigenfunction. 

%In the last equality, we employ the fact that the norm is obtained at the eigenpair $(\lambda_1, f_{1})$ of $\mathcal{K}_{\mu}$. This is because 
Since there exists an orthogonal basis $\{f_i\}_{i=1}^{n}$ of eigenfunctions of $ \mathcal{K}_{\mu_{X}}$ in $L_{\mu_{X}}^2$ with the eigenvalues $\lambda_i$, we can express any $f \in L_{\mu_{X}^2}$ as $f = \sum_{i=1}^{n}\alpha^{(i)}f_i$. As a result, we have
\begin{align*}
    \|\mathcal{K}_{\mu_{X}}\|_{L_{\mu_{{X}}}^{2}\rightarrow \mathcal{H}_{{X}}}^2 = & \max_{f\in L_{\mu_{X}}^2,f\neq 0}\frac{\|\mathcal{K}_{\mu_{{X}}}(f)\|_{\mathcal{H}_{{X}}}^2}{\|f\|_{L_{\mu_{X}}^2}^2}\\
     & = \max_{f\in L_{\mu_{X}}^2,f\neq 0}\frac{\langle \sum_{i=1}^{n}\alpha^{(i)} \mathcal{K}_{\mu_{{X}}}(f_i), \sum_{i=1}^{n}\alpha^{(i)} \mathcal{K}_{\mu_{{X}}}(f_i) \rangle_{\mathcal{H}_{{X}}}}{\|\sum_{i=1}^{n}\alpha^{(i)}f_i\|_{L_{\mu_{{X}}}^{2}}^2}.
\end{align*}
Proposition 10.28 in \cite{wendland_scattered_2004} shows that $\{\mathcal{K}_{\mu_{{X}}}(f_i)\}$ is   orthogonal in $\mathcal{H}_{X}$:
\begin{align}
\label{eq:duality l2 and H}
\langle\mathcal{K}_{\mu_{{X}}}(f_i), \mathcal{K}_{\mu_{{X}}}(f_j)\rangle_{\mathcal{H}_{{X}}} = \langle \mathcal{R}_{\mu_{X}}\mathcal{K}_{\mu_{{X}}}(f_i), f_j\rangle_{L_{\mu_{X}}^2} = \lambda_i\langle f_i, f_j\rangle_{L_{\mu_{X}}^2}.
\end{align}
Thus we obtain
\begin{align*}
    \|\mathcal{K}_{\mu_{X}}\|_{L_{\mu_{{X}}}^{2}\rightarrow \mathcal{H}_{{X}}}^2 
    &  =\max_{f\in L_{\mu_{X}}^2,f\neq 0}\frac{\sum_{i=1}^{n}\lambda_i \vert \alpha^{(i)}\vert^2 \Vert  f_i \Vert_{L_{\mu_{X}}^2}^2}{\sum_{i=1}^{n}\vert \alpha^{(i)}\vert^2\|f_i\|_{L_{\mu_{X}}^2}^2}\\
    &= \max_{f\in L_{\mu_{X}}^2,f\neq 0}
    \sum_{i=1}^{n} \frac{\vert \alpha^{(i)}\vert^2 \Vert f_i \Vert_{L_{\mu_{X}}^2}^2}{\sum_{i=1}^{n}\vert \alpha^{(i)}\vert^2\|f_i\|_{L_{\mu_{X}}^2}^2}\lambda_i \\
    &= \lambda_1.
        \end{align*}
%\eqref{eq:duality l2 and H} also implies that $\Vert f_1\Vert_{\mathcal{H}_X} = \frac{1}{\sqrt{\lambda_1}} \Vert f_1 \Vert_{L_{\mu_{X}}^2}$:
%\begin{align*}
%    \Vert f_1\Vert_{\mathcal{H}_X}^2 = \frac{1}{\lambda_1}\langle f_{1},  \mathcal{K}_{\mu_{{X}}}(f_{1}) \rangle_{\mathcal{H}_X} 
    % = \frac{1}{\lambda_1}\langle f_{1},  \mathcal{K}_{\mu_{{X}}}f_{1} \rangle_{L_{\mu_{X}}^2} 
%    = \frac{1}{\lambda_1} \Vert f_1 \Vert_{L_{\mu_{X}}^2}^2 .
%\end{align*}
% fact that $ \langle\mathcal{K}_{\mu_{{X}}}^{1/2}f_{1},  \mathcal{K}_{\mu_{{X}}}^{1/2}f_{1}\rangle_{\mathcal{H}_X} = \Vert f_1 \Vert_{L_{\mu_{X}}^2}$ since $\mathcal{K}_{\mu_{{X}}}^{1/2}$ is a isometric embedding from $L_{\mu_{X}}^2$ to $\mathcal{H}_X$.

%It is easy to see that $\sqrt{\lambda_1}\leq \sqrt{\operatorname{Trace}(\mathcal{K}_{\mu_{X}})} \leq \sqrt{\sup_{x\in X}K(x,x)} = 1$. 
Since 
$$\mathcal{K}_{\mu_{{X}}}(f_i)(X) = \lambda_if_i(X) \quad \text{and} \quad \mathcal{K}_{\mu_{{X}}}(f_i)(X) = \frac{1}{n} \bK f_i(X),$$
we get
$$ \bK f_i(X) = n\lambda_if_i(X),$$ which implies that $n\lambda_i$ are the eigenvalues of the kernel matrix $\bK$ and in particular,
\begin{equation}
n\lambda_{1}=\Vert \bK\Vert.
\label{eq:lambda1}
\end{equation}

Finally, we have
\begin{align*}
    \frac{1}{n}\Vert \bK- \Knys\Vert < \sqrt{\lambda_1} C'\exp(-C''/h_{X_k})=\frac{1}{\sqrt{n}}\sqrt{\Vert \bK \Vert} C'\exp(-C''/h_{X_k}).
\end{align*}

\end{proof}

\section*{Acknowledgments}
The authors thank Zachary Frangella for sharing with us his MATLAB implementation of the randomized Nystr\"om preconditioner \cite{frangella_randomized_2021}. {The authors also would like to thank two anonymous referees for their valuable suggestions which greatly improve the presentation of the paper.}

\bibliographystyle{siam}
\bibliography{references}

\begin{thebibliography}{10}

\bibitem{NIPS2015_f3f27a32}
{\sc A.~Alaoui and M.~W. Mahoney}, {\em Fast randomized kernel ridge regression with statistical guarantees}, in Advances in Neural Information Processing Systems, C.~Cortes, N.~Lawrence, D.~Lee, M.~Sugiyama, and R.~Garnett, eds., vol.~28, Curran Associates, Inc., 2015.

\bibitem{ambikasaran2013mathcal}
{\sc S.~Ambikasaran and E.~Darve}, {\em An $\mathcal{O}(n \log n)$ fast direct solver for partial hierarchically semi-separable matrices: With application to radial basis function interpolation}, Journal of Scientific Computing, 57 (2013), pp.~477--501.

\bibitem{7130620}
{\sc S.~Ambikasaran, D.~Foreman-Mackey, L.~Greengard, D.~W. Hogg, and M.~O’Neil}, {\em Fast direct methods for gaussian processes}, IEEE Transactions on Pattern Analysis and Machine Intelligence, 38 (2016), pp.~252--265.

\bibitem{belabbas_spectral_2009}
{\sc M.-A. Belabbas and P.~J. Wolfe}, {\em Spectral methods in machine learning and new strategies for very large datasets}, Proceedings of the National Academy of Sciences, 106 (2009), pp.~369--374.

\bibitem{belkin_approximation_2018}
{\sc M.~Belkin}, {\em Approximation beats concentration? {A}n approximation view on inference with smooth radial kernels}, in Conference {On} {Learning} {Theory}, PMLR, 2018, pp.~1348--1361.

\bibitem{binois2022survey}
{\sc M.~Binois and N.~Wycoff}, {\em A survey on high-dimensional gaussian process modeling with application to bayesian optimization}, ACM Transactions on Evolutionary Learning and Optimization, 2 (2022), pp.~1--26.

\bibitem{smash}
{\sc D.~Cai, E.~Chow, L.~Erlandson, Y.~Saad, and Y.~Xi}, {\em {SMASH}: Structured matrix approximation by separation and hierarchy}, Numerical Linear Algebra with Applications, 25 (2018), p.~e2204.

\bibitem{DDFac}
{\sc D.~Cai, E.~Chow, and Y.~Xi}, {\em Data-driven linear complexity low-rank approximation of general kernel matrices: A geometric approach}, arXiv preprint arXiv:2212.12674,  (2022).

\bibitem{DDSMASH}
{\sc D.~Cai, H.~Huang, E.~Chow, and Y.~Xi}, {\em Data-driven construction of hierarchical matrices with nested bases}, SIAM Journal on Scientific Computing, accepted,  (2023).

\bibitem{anchornet}
{\sc D.~Cai, J.~G. Nagy, and Y.~Xi}, {\em {Fast deterministic approximation of symmetric indefinite kernel matrices with high dimensional datasets}}, SIAM Journal on Matrix Analysis and Applications, 43 (2022), pp.~1003--1028.

\bibitem{chang2011libsvm}
{\sc C.-C. Chang and C.-J. Lin}, {\em {LIBSVM}: a library for support vector machines}, ACM Transactions on Intelligent Systems and Technology (TIST), 2 (2011), pp.~1--27.

\bibitem{chen2022randomly}
{\sc Y.~Chen, E.~N. Epperly, J.~A. Tropp, and R.~J. Webber}, {\em Randomly pivoted cholesky: Practical approximation of a kernel matrix with few entry evaluations}, arXiv preprint arXiv:2207.06503,  (2022).

\bibitem{chenhan2019distributed}
{\sc D.~Y. Chenhan, S.~Reiz, and G.~Biros}, {\em Distributed {O}(n) linear solver for dense symmetric hierarchical semi-separable matrices}, in 2019 IEEE 13th International Symposium on Embedded Multicore/Many-core Systems-on-Chip (MCSoC), IEEE, 2019, pp.~1--8.

\bibitem{datta2016hierarchical}
{\sc A.~Datta, S.~Banerjee, A.~O. Finley, and A.~E. Gelfand}, {\em Hierarchical nearest-neighbor gaussian process models for large geostatistical datasets}, Journal of the American Statistical Association, 111 (2016), pp.~800--812.

\bibitem{domingos2012few}
{\sc P.~Domingos}, {\em A few useful things to know about machine learning}, Communications of the ACM, 55 (2012), pp.~78--87.

\bibitem{nys2005}
{\sc P.~Drineas and M.~W. Mahoney}, {\em {On the {Nyström} method for approximating a Gram matrix for improved kernel-based learning}}, Journal of Machine Learning Research, 6 (2005), pp.~2153--2175.

\bibitem{dua2017uci}
{\sc D.~Dua, C.~Graff, et~al.}, {\em {UCI} machine learning repository},  (2017).

\bibitem{FPS97}
{\sc Y.~Eldar, M.~Lindenbaum, M.~Porat, and Y.~Y. Zeevi}, {\em The farthest point strategy for progressive image sampling}, IEEE Transactions on Image Processing, 6 (1997), pp.~1305--1315.

\bibitem{erlandson_accelerating_2020}
{\sc L.~Erlandson, D.~Cai, Y.~Xi, and E.~Chow}, {\em Accelerating parallel hierarchical matrix-vector products via data-driven sampling}, in 2020 IEEE International Parallel and Distributed Processing Symposium (IPDPS), 2020, pp.~749--758.

\bibitem{meshfree}
{\sc G.~E. Fasshauer}, {\em Meshfree Approximation Methods with Matlab}, World Scientific, 2007.

\bibitem{frangella_randomized_2021}
{\sc Z.~Frangella, J.~A. Tropp, and M.~Udell}, {\em Randomized {Nyström} {Preconditioning}}, arXiv preprint arXiv:2110.02820,  (2021).

\bibitem{leverage2016}
{\sc A.~Gittens and M.~W. Mahoney}, {\em Revisiting the {Nyström} method for improved large-scale machine learning}, The Journal of Machine Learning Research, 17 (2016), pp.~3977--4041.

\bibitem{gonzalez_clustering_1985}
{\sc T.~F. Gonzalez}, {\em Clustering to minimize the maximum intercluster distance}, Theoretical Computer Science, 38 (1985), pp.~293--306.

\bibitem{fast-gauss}
{\sc L.~Greengard and J.~Strain}, {\em The fast gauss transform}, SIAM Journal on Scientific and Statistical Computing, 12 (1991), pp.~79--94.

\bibitem{guinness2018permutation}
{\sc J.~Guinness}, {\em Permutation and grouping methods for sharpening gaussian process approximations}, Technometrics, 60 (2018), pp.~415--429.

\bibitem{10.1145/3412850}
{\sc H.~Huang, X.~Xing, and E.~Chow}, {\em {H2Pack}: High-performance {H2} matrix package for kernel matrices using the proxy point method}, 47 (2020), pp.~1--29.

\bibitem{katzfuss2021general}
{\sc M.~Katzfuss and J.~Guinness}, {\em A general framework for vecchia approximations of gaussian processes},  (2021).

\bibitem{fsai}
{\sc L.~Y. Kolotilina and A.~Y. Yeremin}, {\em Factorized sparse approximate inverse preconditionings {I}: Theory}, SIAM J. Matrix Anal. Appl., 14 (1993), pp.~45--–58.

\bibitem{lazzaro2002radial}
{\sc D.~Lazzaro and L.~B. Montefusco}, {\em Radial basis functions for the multivariate interpolation of large scattered data sets}, Journal of Computational and Applied Mathematics, 140 (2002), pp.~521--536.

\bibitem{li_fast_2016}
{\sc C.~Li, S.~Jegelka, and S.~Sra}, {\em Fast dpp sampling for {Nyström} with application to kernel methods}, in International {Conference} on {Machine} {Learning}, PMLR, 2016, pp.~2061--2070.

\bibitem{march2015robust}
{\sc W.~B. March, B.~Xiao, S.~Tharakan, C.~D. Yu, and G.~Biros}, {\em Robust treecode approximation for kernel machines}, in Proceedings of the 21th ACM SIGKDD International Conference on Knowledge Discovery and Data Mining, 2015, pp.~775--784.

\bibitem{martinsson_tropp_2020}
{\sc P.-G. Martinsson and J.~A. Tropp}, {\em Randomized numerical linear algebra: Foundations and algorithms}, Acta Numerica, 29 (2020), pp.~403--572.

\bibitem{musco2017recursive}
{\sc C.~Musco and C.~Musco}, {\em Recursive sampling for the nystrom method}, Advances in neural information processing systems, 30 (2017).

\bibitem{nys2017}
\leavevmode\vrule height 2pt depth -1.6pt width 23pt, {\em {Recursive sampling for the {Nyström} method}}, in Advances in Neural Information Processing Systems, 2017, pp.~3833--3845.

\bibitem{muller_komplexitat_2009}
{\sc S.~Müller}, {\em Komplexität und {Stabilität} von kernbasierten {Rekonstruktionsmethoden}}, {PhD} {Thesis}, Niedersächsische Staats-und Universitätsbibliothek Göttingen, 2009.

\bibitem{FPS06}
{\sc G.~Peyr{\'e} and L.~D. Cohen}, {\em Geodesic remeshing using front propagation}, International Journal of Computer Vision, 69 (2006), pp.~145--156.

\bibitem{pourahmadi1999joint}
{\sc M.~Pourahmadi}, {\em Joint mean-covariance models with applications to longitudinal data: Unconstrained parameterisation}, Biometrika, 86 (1999), pp.~677--690.

\bibitem{3569}
{\sc C.~Rasmussen and C.~Williams}, {\em Gaussian Processes for Machine Learning}, Adaptive Computation and Machine Learning, MIT Press, Cambridge, MA, USA, Jan. 2006.

\bibitem{rebrova2018study}
{\sc E.~Rebrova, G.~Ch{\'a}vez, Y.~Liu, P.~Ghysels, and X.~S. Li}, {\em A study of clustering techniques and hierarchical matrix formats for kernel ridge regression}, in 2018 IEEE international parallel and distributed processing symposium workshops (IPDPSW), IEEE, 2018, pp.~883--892.

\bibitem{sherry19}
{\sc E.~Rebrova, G.~Chávez, Y.~Liu, P.~Ghysels, and X.~S. Li}, {\em A study of clustering techniques and hierarchical matrix formats for kernel ridge regression}, in 2018 IEEE International Parallel and Distributed Processing Symposium Workshops (IPDPSW), 2018, pp.~883--892.

\bibitem{FPS11}
{\sc T.~Schl{\"o}mer, D.~Heck, and O.~Deussen}, {\em Farthest-point optimized point sets with maximized minimum distance}, in Proceedings of the ACM SIGGRAPH Symposium on High Performance Graphics, 2011, pp.~135--142.

\bibitem{schafer_sparse_2021}
{\sc F.~Schäfer, M.~Katzfuss, and H.~Owhadi}, {\em Sparse {Cholesky} factorization by {Kullback}–{Leibler} minimization}, SIAM Journal on Scientific Computing, 43 (2021), pp.~A2019--A2046.

\bibitem{schafer_compression_2021}
{\sc F.~Schäfer, T.~J. Sullivan, and H.~Owhadi}, {\em Compression, inversion, and approximate {PCA} of dense kernel matrices at near-linear computational complexity}, Multiscale Modeling \& Simulation, 19 (2021), pp.~688--730.

\bibitem{shabat_fast_2021}
{\sc G.~Shabat, E.~Choshen, D.~B. Or, and N.~Carmel}, {\em Fast and accurate {Gaussian} kernel ridge regression using matrix decompositions for preconditioning}, SIAM Journal on Matrix Analysis and Applications, 42 (2021), pp.~1073--1095.

\bibitem{si2014memory}
{\sc S.~Si, C.-J. Hsieh, and I.~Dhillon}, {\em Memory efficient kernel approximation}, in International Conference on Machine Learning, PMLR, 2014, pp.~701--709.

\bibitem{stein2002screening}
{\sc M.~L. Stein}, {\em The screening effect in kriging}, The Annals of Statistics, 30 (2002), pp.~298--323.

\bibitem{stein20112010}
\leavevmode\vrule height 2pt depth -1.6pt width 23pt, {\em 2010 rietz lecture: When does the screening effect hold?},  (2011).

\bibitem{stein2015does}
\leavevmode\vrule height 2pt depth -1.6pt width 23pt, {\em When does the screening effect not hold?}, Spatial Statistics, 11 (2015), pp.~65--80.

\bibitem{vecchia_estimation_1988}
{\sc A.~V. Vecchia}, {\em Estimation and model identification for continuous spatial processes}, Journal of the Royal Statistical Society: Series B (Methodological), 50 (1988), pp.~297--312.

\bibitem{wendland_scattered_2004}
{\sc H.~Wendland}, {\em Scattered data approximation}, vol.~17, Cambridge University Press, 2004.

\bibitem{wendland2006computational}
\leavevmode\vrule height 2pt depth -1.6pt width 23pt, {\em Computational aspects of radial basis function approximation}, in Studies in Computational Mathematics, vol.~12, Elsevier, 2006, pp.~231--256.

\bibitem{wenger2022preconditioning}
{\sc J.~Wenger, G.~Pleiss, P.~Hennig, J.~Cunningham, and J.~Gardner}, {\em Preconditioning for scalable gaussian process hyperparameter optimization}, in International Conference on Machine Learning, PMLR, 2022, pp.~23751--23780.

\bibitem{white1991maximal}
{\sc D.~J. White}, {\em The maximal-dispersion problem}, IMA Journal of Management Mathematics, 3 (1991), pp.~131--140.

\bibitem{nys2001}
{\sc C.~K. Williams and M.~Seeger}, {\em {Using the {Nyström} method to speed up kernel machines}}, in Advances in Neural Information Processing Systems, 2001, pp.~682--688.

\bibitem{wu2016packing}
{\sc Y.~Wu}, {\em Packing, covering, and consequences on minimax risk}, Course Lecture notes for ECE598: Information-theoretic methods for high-dimensional statistics,  (2016).

\bibitem{NIPS2004_85353d3b}
{\sc C.~Yang, R.~Duraiswami, and L.~S. Davis}, {\em Efficient kernel machines using the improved fast gauss transform}, in Advances in Neural Information Processing Systems, L.~Saul, Y.~Weiss, and L.~Bottou, eds., vol.~17, MIT Press, 2004.

\bibitem{nys2010kmeans}
{\sc K.~Zhang and J.~T. Kwok}, {\em {Clustered {Nyström} method for large scale manifold learning and dimension reduction}}, IEEE Transactions on Neural Networks, 21 (2010), pp.~1576--1587.

\end{thebibliography}

\end{document}